\DeclareMathAlphabet{\mathpzc}{OT1}{pzc}{m}{it}
\newcommand{\mywarning}[1]{\@latex@warning{#1}}
\newcommand{\colorS}[1]{{\color{orange} #1}}
\newcommand{\colorV}[1]{{\color{red} #1}}
\newcommand{\itemS}[1]{\colorS{\item[S:] #1}}
\newcommand{\itemV}[1]{\colorV{\item[V:] #1}}
\newcommand{\commS}[1]{{\marginpar{\tiny \colorS{#1}}}}
\newcommand{\commV}[1]{{\marginpar{\tiny \colorV{#1}}}}
\newcommand{\comment}[1]{{\color{gray} #1}}
\newcommand{\colorS}[1]{#1}
\newcommand{\colorV}[1]{#1}
\newcommand{\itemS}[1]{}
\newcommand{\itemV}[1]{}
\newcommand{\commS}[1]{}
\newcommand{\commV}[1]{}
\newcommand{\comment}[1]{}
\def\myfooter{\xdef\@thefnmark{}\@footnotetext}
\title{Constructing unlabelled lattices}
\author{\colorV{Volker Gebhardt}, \colorS{Stephen Tawn}}
\date{20 December 2019}
\newcommand{\NN}{\mathbb{N}}
\newcommand{\FF}{\mathbb{F}}
\newcommand{\upset}[2][]{{\uparrow}_{#1}\,#2}
\newcommand{\downset}[2][]{{\downarrow}_{#1}\,#2}
\newcommand{\isabove}[1][]{\mathrel{\sqsupseteq_{#1}}}
\newcommand{\isbelow}[1][]{\mathrel{\sqsubseteq_{#1}}}
\newcommand{\join}[1][]{\mathbin{\vee_{#1}}}
\newcommand{\meet}[1][]{\mathbin{\wedge_{#1}}}
\newcommand{\iscoveredby}[1][]{\mathrel{\prec_{#1}}}
\newcommand{\dep}[1][]{\mathrm{dep}_{#1}}
\newcommandx*{\cov}[2][1=,2=]{\mathchoice
                                {\bigcurlywedge{\rule[-1.2ex]{0pt}{1ex}}_{#1}^{#2}}
                                {\bigcurlywedge_{#1}^{#2}}
                                {\bigcurlywedge_{#1}^{#2}}
                                {\bigcurlywedge_{#1}^{#2}}}
\newcommandx*{\LWT}[2][1=,2=]{\mathcal{LW}_{#1}^{#2}}
\newcommand{\wt}{\mathrm{wt}}
\newcommand{\lev}{\mathrm{lev}}
\newcommand{\Stab}{\mathrm{Stab}}
\newcommand{\Sym}{\mathrm{Sym}}
\pgfplotsset{
    /pgfplots/table/omit header/.style={%
        /pgfplots/table/typeset cell/.append code={%
            \ifnum\c@pgfplotstable@rowindex=-1
                \pgfkeyslet{/pgfplots/table/@cell content}\pgfutil@empty%
            \fi
        }
    }
}
\theoremstyle{plain}
\newtheorem{theorem}{Theorem}
\newaliascnt{lemma}{theorem}
\newtheorem{lemma}[lemma]{Lemma}
\newaliascnt{proposition}{theorem}
\newaliascnt{corollary}{theorem}
\newtheorem{corollary}[corollary]{Corollary}
\newaliascnt{conjecture}{theorem}
\theoremstyle{definition}
\newaliascnt{definition}{theorem}
\newtheorem{definition}[definition]{Definition}
\newaliascnt{example}{theorem}
\newaliascnt{notation}{theorem}
\newtheorem{notation}[notation]{Notation}
\newaliascnt{remark}{theorem}
\newtheorem{remark}[remark]{Remark}
\newaliascnt{claim}{theorem}
\newtheorem*{claim*}{Claim}
\def\MR@url=#1 #2={http://www.ams.org/mathscinet-getitem?mr=#1}
\newcommand{\MR}[1]{{\scriptsize\href{\MR@url =#1 =}{MR#1}}}
\newcommand{\arXiv}[1]{\href{http://arxiv.org/abs/#1}{\textsf{arXiv}:#1}}
\begin{document}

\maketitle
\myfooter{Both authors acknowledge support under UWS grant 20721.81112.
Volker Gebhardt acknowledges support under the Government of Spain Project MTM2013-44233-P.}
\myfooter{MSC-class: 05A15 (Primary) 06A07, 05-04 (Secondary)}
\myfooter{Keywords: unlabelled lattice, enumeration, orderly algorithm}

\begin{abstract}
We present an improved orderly algorithm for constructing all unlabelled lattices up to a given size, that is, an algorithm that constructs the minimal element of each isomorphism class relative to some total order.

Our algorithm employs a stabiliser chain approach for cutting branches of the search space that cannot contain a minimal lattice; to make this work, we grow lattices by adding a new layer at a time, as opposed to adding one new element at a time, and we use a total order that is compatible with this modified strategy.

The gain in speed is between one and two orders of magnitude.
As an application, we compute the number of unlabelled lattices on 20 elements.
\end{abstract}

\section{Introduction}\label{S:Introduction}

Enumerating all isomorphism classes of unlabelled lattices, in the sense of systematically constructing a complete list of isomorphism classes up to a certain size threshold, is a difficult combinatorial problem.
The number $u_n$ of isomorphism classes of unlabelled lattices on~$n$ elements grows faster than exponentially in~$n$~\cite{KlotzLucht,KleitmanWinston}, as does the number of (labelled) representatives of each isomorphism class.
Indeed, the largest value of~$n$ for which $u_n$ has been published previously is $n=19$~\cite{JipsenLawless,OEIS}.%

\medskip\noindent
When trying to enumerate combinatorial objects modulo isomorphism, one typically faces the problem that the number and the size of the isomorphism classes are so large that trying to weed out isomorphic objects through explicit isomorphism tests is out of the question.  Instead, an \emph{orderly algorithm} is needed, that is, an algorithm that traverses the search space in such a way that every isomorphism class is encountered exactly once.

A general strategy for the construction of isomorphism classes of combinatorial objects using \emph{canonical construction paths} was described in~\cite{McKay}.
Orderly algorithms for enumerating isomorphism classes of unlabelled lattices, as well as special subclasses of unlabelled lattices, were given in~\cite{HeitzigReinhold,JipsenLawless}.
The fastest published method currently is the one described in~\cite{JipsenLawless}; the computations of $u_{18}$ and $u_{19}$ reported in~\cite{JipsenLawless} took 26~hours respectively 19 days on 64 CPUs.

\medskip\noindent
The algorithms described in~\cite{HeitzigReinhold,JipsenLawless} follow a similar strategy:
\begin{inparaenum}[(i)]
 \item
   A total order~$<_\wt$ on all labelled lattices of a given size is defined.
 \item
   Starting from the (unique) $<_\wt$-minimal lattice on 2 elements, the $<_\wt$-minimal labelled representative of each isomorphism class of unlabelled lattices with at most~$n$ elements is constructed using a depth first search, where the children of a parent lattice are obtained by adding a single new element covering the minimal element of the parent lattice.
 \item
   For each parent lattice, one child is obtained for every choice for the covering set of the added element that yields a labelled lattice that is $<_\wt$-minimal in its isomorphism class of unlabelled lattices.
\end{inparaenum}

It is the test for $<_\wt$-minimality in step (iii) that takes most of the time:  While there are some necessary conditions that are easy to verify, ensuring that the newly constructed labelled lattice is indeed $<_\wt$-minimal in its isomorphism class requires checking the candidate covering set of the added element against all possible relabellings of the elements of the existing lattice; details are given in \autoref{S:Background}.
Basically, one has a certain permutation group that acts on a configuration space of covering sets, and one must verify that a given candidate is minimal in its orbit.

\medskip\noindent
It turns out that the elements of a $<_\wt$-minimal labelled lattice are arranged by \emph{levels} (cf.\ \autoref{S:Background}), and thus it is tempting to construct and test candidate covering sets of a new element level by level, exploiting the levellised structure for a divide-and-conquer approach; such an approach promises two advantages:
\begin{inparaenum}[(i)]
 \item
   The orbit of the restriction of a candidate covering set to a given level is potentially much smaller than the orbit of the complete covering set.
 \item
   The entire branch of the search space that corresponds to the candidate configuration of covers on the given level can potentially be discarded in a single test.
\end{inparaenum}

However, we shall see that  the constructions from~\cite{HeitzigReinhold,JipsenLawless} do not adapt well to this levellised approach:
In order to make the levellised approach work, we need to modify the depth-first-search to add one \emph{level} at a time as opposed to one \emph{element} at a time, and we need to modify the total order to be level-major.%

\bigskip\noindent
The structure of the paper is as follows:
In \autoref{S:Background}, we recall some results from~\cite{HeitzigReinhold,JipsenLawless} that are needed later, and we interpret the total order used in~\cite{HeitzigReinhold,JipsenLawless} as row-major.
In \autoref{S:ImprovedAlgorithm}, we describe our new construction using a level-major order and prove the results required to establish its correctness.
In \autoref{S:ImplementationResults}, we remark on implementation details and compare the performance of our new approach to that of those published in~\cite{HeitzigReinhold,JipsenLawless}.

\bigskip\noindent
We thank the Institute for Mathematics at the University of Seville (IMUS) for providing access to a 64-node 512 GB RAM computer that was used for an earlier version of this paper,
Peter Jipsen and Nathan Lawless for providing the code for the algorithm used in \cite{JipsenLawless} for timing comparisons,
Jukka Kohonen for suggesting to include a discussion of graded lattices,
and the anonymous referee for their valuable comments and suggestions.

\section{Background}\label{S:Background}

We start by giving a brief summary of the algorithms from~\cite{HeitzigReinhold,JipsenLawless}.
We refer to these sources for details.

\begin{definition}\label{D:lattice}
A finite bounded poset~$L$ is an \emph{$n$-poset}, if the elements of~$L$ are labelled $0,1,\ldots,n-1$, where~$0$ is a lower bound of~$L$ and~$1$ is an upper bound of~$L$.  An $n$-poset that is a lattice is called an \emph{$n$-lattice}.
To avoid confusion with the numerical order of integers, we denote the partial order of an $n$-poset~$L$ by $\isbelow[L]$ and $\isabove[L]$, or simply $\isbelow$ and $\isabove$ if the poset is obvious.
\end{definition}

\begin{notation}\label{N:lattice}
Assume that~$L$ is an $n$-poset.

For $a\in L$, we define the \emph{shadow} of~$a$ as $\downset{a} = \downset[L]{a} = \{ x\in L : x\isbelow[L] a \}$ and the \emph{shade} of~$a$ as $\upset{a} = \upset[L]{a} = \{ x\in L : x\isabove[L] a \}$.
For $A\subseteq L$, we define $\upset{A} = \upset[L]{A} = \bigcup_{a\in A}\upset[L]{a}$ as well as $\downset{A} = \downset[L]{A} = \bigcup_{a\in A}\downset[L]{a}$.

We say that~$a\in L$ has \emph{depth} $\dep(a) = \dep[L](a) = p$, if the maximum length of any chain from~$1$ to~$a$ in~$L$ is~$p+1$.
Given a non-negative integer~$k$, we call $\lev_k(L) = \{ a\in L : \dep[L](a) = k \}$ the \emph{$k$-th level} of~$L$.
We say that~$L$ is \emph{levellised}, if $\dep[L](i) \le \dep[L](j)$ holds for all $0<i\le j<n$.

For $a,b\in L$, we write $a\iscoveredby_L b$ (or simply $a\iscoveredby b$) if~$a$ is \emph{covered} by~$b$ in~$L$, that is, $a\isbelow_L b$ holds and $a\isbelow_L x\isbelow_L b$ implies $x=a$ or $x=b$.
We denote the \emph{covering set} of~$a\in L$ by $\cov a = \cov[L] a = \{ x\in L : a \iscoveredby x \}$, and we say that $a\in L$ is an \emph{atom} in~$L$ if $0\iscoveredby[L] a$ holds.

If~$L$ is a lattice and $a,b\in L$, we denote the least common upper bound of~$a$ and~$b$ in~$L$ by $a\join_L b$ (or simply $a\join b$), and the greatest common lower bound of~$a$ and~$b$ in~$L$ by $a\meet_L b$ (or simply $a\meet b$).

\end{notation}

\subsection{Canonical representatives}\label{SS:CanonicalRepresentatives}

The idea of an orderly algorithm is to construct all those lattices that are minimal, with respect to a suitable total order, in their isomorphism class.  In this section, we recall the total order used in~\cite{HeitzigReinhold,JipsenLawless} and some of its properties.

\begin{definition}\label{D:HR-weight}
Let~$L$ be an $n$-poset.
\begin{enumerate}[(a)]
 \item
   For $A\subseteq L$, we define $\wt_L(A) = \sum\limits_{j\in A} 2^j$.
 \item
   For $i\in L$, we define $\wt_L(i) = \wt_L\big(\cov[L] i\big)$.
 \item
   We define $\wt(L) = \big( \wt_L(2),\wt_L(3),\ldots,\wt_L(n-1)\big)$.
\end{enumerate}
Ordering $n$-lattices lexicographically with respect to $\wt(L)$, we obtain a total order $<_\wt$ on the set of all $n$-lattices.
We call an $n$-lattice \emph{$<_\wt$-minimal}, if it is minimal with respect to~$<_\wt$ in its isomorphism class.
\end{definition}

\begin{remark}\label{R:HR-order}
An $n$-poset~$L$ is completely defined by its covering relation.
Indeed, the upper bound~$1$ is not covered by any element, and it covers precisely those elements that are not covered by any other element.  Similarly, the lower bound~$0$ covers no element, and it is covered precisely by those elements that do not cover any other element.
Thus, $L$ is completely described by specifying the pairs $(i,j)$, for $1<i,j<n$, for which $i\iscoveredby j$ holds.

The latter information can be interpreted as an~$(n-2)$ by~$(n-2)$ matrix over~$\FF_2$,
where rows and columns are numbered $2,\ldots,n-1$, and the entry in row~$i$ and column~$j$ indicates whether or not $i\iscoveredby j$ holds.
The total order $<_\wt$ from \autoref{D:HR-weight} then amounts to a right-to-left row-major lexicographic order on the associated matrices; cf.\ \autoref{F:HR-order}.
 \begin{figure}[b]
    \begin{center}
      \begin{tikzpicture}[scale=0.30]
        \draw[step=1.0,black,thin] (0,0) grid (9,9);
        \node at (-0.5,8.5) {$\scriptstyle 2$};
        \node at (-1.0,0.5) {$\scriptstyle n-1$};
        \node at (0.5,-0.5) {$\scriptstyle 2$};
        \node at (10.5,-0.5) {$\scriptstyle n-1$};
        \draw (8.5,-0.2) -- (8.5,-0.5) -- (9.5,-0.5);
        \draw[-<,cyan] (0.5,0.5) -- (4.5,0.5);
        \draw[cyan] (4.5,0.5) -- (8.5,0.5);
        \draw[cyan,dashed] (8.5,0.5) -- (0.5,1.5);
        \draw[-<,cyan] (0.5,1.5) -- (4.5,1.5);
        \draw[cyan] (4.5,1.5) -- (8.5,1.5);
        \draw[cyan,dashed] (8.5,1.5) -- (0.5,2.5);
        \draw[-<,cyan] (0.5,2.5) -- (4.5,2.5);
        \draw[cyan] (4.5,2.5) -- (8.5,2.5);
        \draw[cyan,dashed] (8.5,2.5) -- (0.5,3.5);
        \draw[-<,cyan] (0.5,3.5) -- (4.5,3.5);
        \draw[cyan] (4.5,3.5) -- (8.5,3.5);
        \draw[cyan,dashed] (8.5,3.5) -- (0.5,4.5);
        \draw[-<,cyan] (0.5,4.5) -- (4.5,4.5);
        \draw[cyan] (4.5,4.5) -- (8.5,4.5);
        \draw[cyan,dashed] (8.5,4.5) -- (0.5,5.5);
        \draw[-<,cyan] (0.5,5.5) -- (4.5,5.5);
        \draw[cyan] (4.5,5.5) -- (8.5,5.5);
        \draw[cyan,dashed] (8.5,5.5) -- (0.5,6.5);
        \draw[-<,cyan] (0.5,6.5) -- (4.5,6.5);
        \draw[cyan] (4.5,6.5) -- (8.5,6.5);
        \draw[cyan,dashed] (8.5,6.5) -- (0.5,7.5);
        \draw[-<,cyan] (0.5,7.5) -- (4.5,7.5);
        \draw[cyan] (4.5,7.5) -- (8.5,7.5);
        \draw[cyan,dashed] (8.5,7.5) -- (0.5,8.5);
        \draw[-<,cyan] (0.5,8.5) -- (4.5,8.5);
        \draw[cyan] (4.5,8.5) -- (8.5,8.5);
      \end{tikzpicture}
    \end{center}\vspace{-4.5ex}
    \caption{Interpreting the order $<_\wt$ as right-to-left row-major lexicographic order on the matrices specifying the covering relation.
    Note that adding a row and a column for~$1$ does not affect the order:
    All entries in the added row are~$0$, and each entry in the added column is determined by the other entries in the same row, and it is checked after those in the lexicographic comparison.}\label{F:HR-order}
 \end{figure}
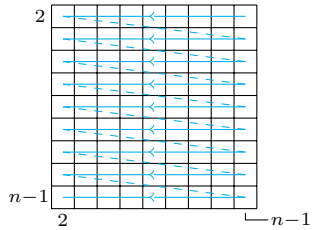
\end{remark}

\begin{theorem}[{\cite[Theorem 1]{HeitzigReinhold}}]\label{T:canonical-levellised}
If~$L$ is a $<_\wt$-minimal $n$-lattice and one has $0<i\le j<n$, then $\dep[L](i) \le \dep[L](j)$ holds.
\end{theorem}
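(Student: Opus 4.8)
The plan is to argue by contradiction. Suppose $L$ is $<_\wt$-minimal but $\dep[L]$ is not non-decreasing on $\{1,\dots,n-1\}$ (the negation of the claim), and produce an $n$-lattice isomorphic to $L$ that is strictly smaller in $<_\wt$. Two elementary facts about depth in a bounded poset will be used. First, $\dep[L]$ is strictly order-reversing, so every upper cover of an element $y$ has depth at most $\dep[L](y)-1$. Second, reading off a longest chain from $1$ down to an element $y\neq 1$ shows that the element of that chain which covers $y$ is an upper cover of $y$ of depth \emph{exactly} $\dep[L](y)-1$.

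The next step is to locate the first level at which the labelling misbehaves. Since $L$ is not levellised, there is a least integer $d$ for which $D_d := \{x\in\{1,\dots,n-1\} : \dep[L](x)\le d\}$ is not an initial segment $\{1,2,\dots,k\}$; from $D_0=\{1\}$ and $D_{\dep[L](0)}=\{1,\dots,n-1\}$ one gets $1\le d<\dep[L](0)$, and minimality of $d$ forces $D_{d-1}=\{1,\dots,m\}$ for some $m\ge 1$. Put $S:=\lev_d(L)$; then $S=D_d\setminus D_{d-1}\subseteq\{m+1,\dots,n-1\}$, $S\neq\emptyset$, and $\{1,\dots,m\}\cup S=D_d$ is not an initial segment. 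Writing $S=\{s_1<\dots<s_t\}$, let $r^*$ be least with $s_{r^*}\neq m+r^*$; then $s_r=m+r$ for $r<r^*$, $s_{r^*}>m+r^*$, and in particular $m+r^*\notin S$, so $\dep[L](m+r^*)\ge d+1$.

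The relabelling I would use is the permutation $\sigma$ of $\{0,1,\dots,n-1\}$ that fixes $\{0,1,\dots,m\}$ pointwise, carries $S$ order-isomorphically onto $\{m+1,\dots,m+t\}$, and carries $\{m+1,\dots,n-1\}\setminus S$ order-isomorphically onto $\{m+t+1,\dots,n-1\}$; let $L'\cong L$ be $L$ relabelled by $\sigma$, so that $\cov[L']{k}=\sigma\bigl(\cov[L]{\sigma^{-1}(k)}\bigr)$ for every $k$. One checks that $\sigma$ fixes $\{1,\dots,m+r^*-1\}$ pointwise and that $\sigma^{-1}(m+r^*)=s_{r^*}$. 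The comparison of $\wt(L')$ with $\wt(L)$ now rests on one observation: every element of depth at most $d$ has all of its upper covers in $D_{d-1}=\{1,\dots,m\}$, which $\sigma$ fixes pointwise. Since each label $k$ with $2\le k\le m+r^*-1$ has depth at most $d$, its covering set is unmoved by $\sigma$ and $\wt_L(k)=\wt_{L'}(k)$ there. At coordinate $m+r^*$ one has $\wt_{L'}(m+r^*)=\wt_L\bigl(\cov[L]{s_{r^*}}\bigr)$, and since $\dep[L](s_{r^*})=d$ this is at most $2^1+\dots+2^m=2^{m+1}-2$; on the other hand $\dep[L](m+r^*)\ge d+1$, so by the second fact above $m+r^*$ has an upper cover of depth $\ge d$, which necessarily carries a label $>m$, whence $\wt_L(m+r^*)\ge 2^{m+1}>\wt_{L'}(m+r^*)$. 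Therefore $L'<_\wt L$, contradicting the minimality of $L$.

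I expect the main obstacle to be the choice of relabelling. A naive local fix — transposing two adjacent out-of-order labels — need not decrease $\wt$, because the covering set of the shallower of the two elements can contain arbitrarily large labels; the remedy is to move an \emph{entire} level into the block of labels immediately following the levels beneath it, and to invoke the minimality of $d$ so that every coordinate before the first changed one is preserved. The other delicate point is the closing weight estimate: a depth-$d$ element has a ``cheap'' covering set (labels $\le m$), whereas an element too deep for its label is forced to have an ``expensive'' one (an upper cover with label $>m$).
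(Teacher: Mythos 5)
Your argument is correct, and it is self-contained, which is worth noting because the paper itself does not prove this statement: it is quoted as Theorem~1 of Heitzig--Reinhold, so there is no in-paper proof to compare against. Your route is the natural relabelling argument and, as far as the comparison can be made, it is in the same spirit as the original: locate the first depth $d$ at which the labels of $D_d=\{x : \dep[L](x)\le d\}$ fail to form an initial segment, move the whole level $\lev_d(L)$ into the block of labels immediately after $D_{d-1}=\{1,\ldots,m\}$, and compare lexicographically at the first changed coordinate. The key verifications all hold: covers strictly decrease depth, so every element of depth $\le d$ has $\cov[L] i\subseteq D_{d-1}$ (note $1\in D_{d-1}$ since $d\ge 1$), whence $\sigma$ leaves the weights $\wt_L(2),\ldots,\wt_L(m+r^*-1)$ untouched; at coordinate $m+r^*$ the new weight is $\wt_L\big(\cov[L] s_{r^*}\big)\le 2^{m+1}-2$, while a maximum-length chain from $1$ to $m+r^*$ produces a cover of depth $\dep[L](m+r^*)-1\ge d$, which must carry a label $>m$, giving $\wt_L(m+r^*)\ge 2^{m+1}$; and $\sigma$ fixes $0$ and $1$, so $L'$ is a legitimate $n$-lattice isomorphic to $L$ with $L'<_\wt L$, contradicting minimality. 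Your closing remark is also the right diagnosis of the main pitfall: a local transposition of two out-of-order labels need not decrease $\wt$, so moving an entire level (and using minimality of $d$ to freeze all earlier coordinates) is exactly what makes the lexicographic comparison go through.
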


\begin{corollary}\label{C:canonical-levellised}
If~$L$ is a $<_\wt$-minimal $n$-lattice and one has $0<i$ and $i\iscoveredby j$, then $j<i$ holds.
\end{corollary}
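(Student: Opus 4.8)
The plan is to obtain the claim as the contrapositive of \autoref{T:canonical-levellised}. That theorem asserts that $0 < i \le j < n$ implies $\dep[L](i) \le \dep[L](j)$; since $j$ is a label of the $n$-poset~$L$ we automatically have $j < n$, so the contrapositive reads: if $0 < i$ and $\dep[L](i) > \dep[L](j)$, then $j < i$. As $i > 0$ is part of the hypothesis, it thus suffices to show that $i \iscoveredby j$ forces the \emph{strict} inequality $\dep[L](i) > \dep[L](j)$.

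To prove this, I would let $C$ be a chain from~$1$ to~$j$ in~$L$ of maximal length, so that $|C| = \dep[L](j) + 1$ by the definition of depth. From $i \iscoveredby j$ we get $i \isbelow[L] j$ with $i \ne j$; since the elements of~$C$ all lie between~$j$ and~$1$, each $x \in C$ satisfies $i \isbelow[L] x$ and $i \ne x$. Hence $i \notin C$, and $C \cup \{i\}$ is again a chain, now running from~$1$ to~$i$, with $|C \cup \{i\}| = \dep[L](j) + 2$. The definition of depth then gives $\dep[L](i) + 1 \ge \dep[L](j) + 2$, that is, $\dep[L](i) > \dep[L](j)$. (Only $i \isbelow[L] j$ with $i \ne j$ enters here; that the relation is a covering is not actually needed for this step.)

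Combining the two steps yields $j < i$, as claimed. I do not expect any genuine obstacle: the corollary is a short consequence of \autoref{T:canonical-levellised}, and the single point that deserves a line of justification is the strictness of $\dep[L](i) > \dep[L](j)$, which comes precisely from the fact that a maximal chain from~$1$ down to~$j$ can always be prolonged by one further step to~$i$.
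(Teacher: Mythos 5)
Your proof is correct and matches the intended argument: the paper states this corollary without proof as an immediate consequence of \autoref{T:canonical-levellised}, and your contrapositive reading together with the observation that $i\iscoveredby j$ (indeed, already $i\isbelow[L] j$ with $i\ne j$) forces $\dep[L](i)>\dep[L](j)$ by extending a maximal chain from~$1$ to~$j$ is exactly the justification that is implicit there.
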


\begin{remark}\label{R:canonical-levellised}
\autoref{T:canonical-levellised} and \autoref{C:canonical-levellised} say that a $<_\wt$-minimal $n$-lattice~$L$ is levellised, that is, that the non-minimal levels of~$L$ are filled by elements labelled in their numerical order; cf.\ \autoref{F:levellised}.
 \begin{figure}[b]
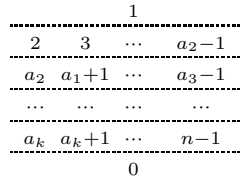

   \[\begin{array}{c@{\hspace{0.5em}}c@{\hspace{0.5em}}c@{\hspace{1.25em}}c}
                        &                     & \scriptstyle 1     &                    \\[-0.1ex] \hdashline[1pt/1pt]
     \scriptstyle 2 & \scriptstyle 3      & \scriptstyle\cdots & \scriptstyle a_2-1 \\[-0.1ex] \hdashline[1pt/1pt]
     \scriptstyle a_2   & \scriptstyle a_1+1  & \scriptstyle\cdots & \scriptstyle a_3-1 \\[-0.1ex] \hdashline[1pt/1pt]
     \scriptstyle\cdots & \scriptstyle\cdots  & \scriptstyle\cdots & \scriptstyle\cdots \\[-0.1ex] \hdashline[1pt/1pt]
     \scriptstyle a_k   & \scriptstyle a_k+1  & \scriptstyle\cdots & \scriptstyle n-1   \\[-0.1ex] \hdashline[1pt/1pt]
                        &                     & \scriptstyle 0
  \end{array}\]
  \vspace{-4.5ex}
  \caption{A levellised $n$-lattice; the dashed lines separate the levels.}\label{F:levellised}
 \end{figure}
\end{remark}

\begin{remark}\label{R:relabelling-levellised}
Any relabelling $\pi\in\Sym(\{0,\ldots,n-1\})$ of the elements of an $n$-lattice~$L$ preserves the levels, in the sense that $\dep[\pi(L)](\pi(i))=\dep[L](i)$ holds for every $i\in L$.  In particular, if~$L$ is a levellised lattice with the elements labelled as in \autoref{F:levellised}, then the relabelled lattice $\pi(L)$ is levellised if and only if
\begin{align*}
   \pi \in & \,\,\Sym(\{0\})
             \times \Sym(\{1\})
             \times \Sym(\{2,\ldots,a_2-1\}) \times  \\
           & \times \Sym(\{a_2,\ldots,a_3-1\})
             \times \cdots
             \times \Sym(\{a_k,\ldots,n-1\})
\end{align*}
holds.
\end{remark}

\begin{theorem}[{\cite[Theorem 2]{HeitzigReinhold}}]\label{T:canonical-weights}
If~$L$ is a $<_\wt$-minimal $n$-lattice, one has
\[
   \wt_L(2) \le \wt_L(3) \le \ldots \le \wt_L(n-1)
   \;.
\]
\end{theorem}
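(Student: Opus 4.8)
The plan is to argue by contradiction, exploiting the row-major interpretation of $<_\wt$ from \autoref{R:HR-order} together with the levellised structure guaranteed by \autoref{T:canonical-levellised}. Suppose $L$ is $<_\wt$-minimal but $\wt_L(i) > \wt_L(i+1)$ for some $2 \le i < n-1$. By \autoref{T:canonical-levellised}, $\dep_L(i) \le \dep_L(i+1)$; I would first dispose of the case $\dep_L(i) < \dep_L(i+1)$, where $i$ is the last element of its level and $i+1$ the first element of the next. Here $\cov_L i \subseteq \{0,\dots,i-1\}$ while $\cov_L(i+1) \subseteq \{0,\dots,i\}$, and one checks that $\wt_L(i) > \wt_L(i+1)$ forces $\cov_L(i+1) \subseteq \{0,\dots,i-1\}$ as well — so $i$ and $i+1$ can be swapped without leaving any element at a depth inconsistent with its label — reducing to the equal-depth case (or handled directly by the same transposition argument below).

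The heart of the argument is the case $\dep_L(i) = \dep_L(i+1)$. Let $\pi$ be the transposition $(i\ \ i{+}1)$. By \autoref{R:relabelling-levellised}, $\pi(L)$ is again a levellised $n$-lattice isomorphic to $L$, so by minimality $\wt(L) \le_{\mathrm{lex}} \wt(\pi(L))$. The strategy is to compute $\wt(\pi(L))$ componentwise and show this inequality is violated. For a row index $m \notin \{i, i+1\}$: the covering set $\cov_{\pi(L)} m$ is obtained from $\cov_L m$ by relabelling, i.e. by applying $\pi$ to its elements, which only swaps the roles of the bits $2^i$ and $2^{i+1}$ inside $\wt_L(\cov_L m)$. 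For the rows $i$ and $i+1$ themselves, $\cov_{\pi(L)} i = \pi(\cov_L(i+1))$ and $\cov_{\pi(L)}(i+1) = \pi(\cov_L i)$. Now observe that whether $i \in \cov_L(i+1)$ or $i+1 \in \cov_L i$ is impossible, since $i$ and $i+1$ have the same depth and hence are incomparable; so neither of the bits $2^i$, $2^{i+1}$ appears in $\wt_L(i)$ or $\wt_L(i+1)$, and $\pi$ acts trivially on those two particular weights. Consequently $\wt_{\pi(L)}(i) = \wt_L(i+1)$ and $\wt_{\pi(L)}(i+1) = \wt_L(i)$.

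It remains to locate the first index at which $\wt(L)$ and $\wt(\pi(L))$ differ. The plan is to show it is index $i$: for every row $m < i$, I claim $\wt_{\pi(L)}(m) = \wt_L(m)$. Indeed, for such $m$ the covering set $\cov_L m$ lives strictly above level $\dep_L(i)$ in label terms? — more carefully, one uses that $m < i$ together with \autoref{C:canonical-levellised} (covers have strictly smaller labels) to see $m$ cannot cover $i$ or $i+1$ unless... — here one must check that $\cov_L m$ either contains both of $i,i+1$ or neither, so that the bit-swap $\pi$ leaves $\wt_L(\cov_L m)$ fixed. This is the delicate point and the main obstacle: it is not automatic that $m$ covers $i$ iff it covers $i+1$. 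I would resolve it by a separate minimality argument — if $m$ (with $m<i$) covered exactly one of $i, i+1$, say only $i+1$ but not $i$, then swapping just pushes weight down and one again compares with $\pi(L)$; iterating, the truly minimal configuration must have each row $m<i$ symmetric in $i$ and $i+1$, which is exactly the statement that the partial order restricted to $\{i,i+1\}$ versus everything with a smaller label is "balanced". Granting this, $\wt(L)$ and $\wt(\pi(L))$ first differ at index $i$, where $\wt_L(i) > \wt_L(i+1) = \wt_{\pi(L)}(i)$, so $\wt(\pi(L)) <_\wt \wt(L)$, contradicting minimality of $L$. Hence $\wt_L(2) \le \wt_L(3) \le \cdots \le \wt_L(n-1)$.
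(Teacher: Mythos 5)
Your overall strategy---compare $L$ with its image under the transposition $\pi=(i\;\,i{+}1)$ and locate the first position at which the weight vectors differ---is sound, and it is essentially the exchange argument behind the quoted result (note that this paper only cites \cite[Theorem~2]{HeitzigReinhold} and contains no proof of its own). The step you single out as ``the delicate point'', however, is not delicate at all, and the patch you sketch for it is the one place where your write-up genuinely fails. For a row $m$ with $2\le m<i$, the entries of that row record the set $\cov[L] m$ of elements \emph{covering}~$m$, and by \autoref{C:canonical-levellised} every element of $\cov[L] m$ has label strictly smaller than $m<i$; hence $\cov[L] m\cap\{i,i+1\}=\emptyset$, so the bit swap fixes $\wt_L(m)$ outright and the first difference is indeed at position~$i$. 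Your worry about whether ``$m$ covers $i$ iff it covers $i{+}1$'' mixes up the two directions of the covering relation: whether $m\in\cov[L] i$ or $m\in\cov[L] (i{+}1)$ only influences rows $i$ and $i{+}1$, which come \emph{after} the rows $m<i$ in the lexicographic comparison and are precisely the rows you are permitted to change. The auxiliary ``iterate the swap until each row $m<i$ is balanced in $i$ and $i{+}1$'' argument should therefore be deleted: as written it is not a proof (no well-definedness or termination is argued, and the symmetry it aims for is not what is needed), and nothing is required in its place beyond the one-line appeal to \autoref{C:canonical-levellised}.

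Two further simplifications: the case distinction on depths is unnecessary, because the hypothesis $\wt_L(i)>\wt_L(i+1)$ already forces $i\notin\cov[L](i{+}1)$ --- by \autoref{C:canonical-levellised} one has $\cov[L] i\subseteq\{1,\ldots,i-1\}$, so $\wt_L(i)\le 2^i-2<2^i$, whereas $i\in\cov[L](i{+}1)$ would give $\wt_L(i+1)\ge 2^i$ --- and with this the uniform computation $\wt_{\pi(L)}(m)=\wt_L(m)$ for $m<i$, $\wt_{\pi(L)}(i)=\wt_L(i+1)<\wt_L(i)$ yields $\pi(L)<_\wt L$ directly. Also, there is no need to check that $\pi(L)$ is levellised: $<_\wt$-minimality is minimality within the whole isomorphism class of labelled $n$-lattices, and $\pi(L)$ is a legitimate competitor since $\pi$ fixes $0$ and $1$. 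With these repairs your argument is correct and complete.
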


\begin{remark}\label{R:canonical-weight}
\autoref{T:canonical-weights} says that the rows of the matrices describing the covering relation of a $<_\wt$-minimal $n$-lattice~$L$ (cf.\ \autoref{F:HR-order}) are sorted in non-decreasing order with respect to a (right-to-left) lexicographic order on the~rows.
\end{remark}

\subsection{Incremental construction}\label{SS:IncrementalConstruction}

The algorithms from~\cite{HeitzigReinhold,JipsenLawless} work by traversing a tree of $<_\wt$-minimal $n$-lattices in a depth-first manner; the root of the tree is the unique 2-lattice, and an $(n+1)$-lattice~$\widetilde{L}$ is a descendant of the $<_\wt$-minimal $n$-lattice~$L$, if $\widetilde{L}$ is obtained from~$L$ by adding a new cover of~$0$ (labelled~$n$) and~$\widetilde{L}$ is $<_\wt$-minimal.

The lattice~$\widetilde{L}$ is determined by~$L$ and the covering set of the new element~$n$; the possible choices for the latter can be characterised effectively.

\begin{definition}[\cite{HeitzigReinhold}]\label{D:lattice-antichain}
If~$L$ is an $n$-lattice, a non-empty antichain $A\subseteq L\setminus\{0\}$ is called a \emph{lattice-antichain} for~$L$, if $a\meet[L] b \in \{0\} \cup (\upset[L]{A})$ holds for any $a,b\in\upset[L]{A}$.
\end{definition}

\begin{remark}\label{R:lattice-antichain}
 To test the condition in \autoref{D:lattice-antichain}, it is clearly sufficient to verify that
 $a\meet[L] b \in \upset[L]{A}$ holds for those pairs~$(a,b)$ that are minimal in the set
 \[
    \big\{ (a,b) \in (\upset[L]{A}) \times (\upset[L]{A}) : a\meet[L] b \ne 0 \big\}
 \]
 with respect to the product partial order in~$L\times L$.
\end{remark}

\begin{theorem}[{\cite[Lemma 2]{HeitzigReinhold}}]\label{T:HR-incremental}
Let~$L$ be an $n$-lattice.  A subset $A\subseteq L\setminus\{0\}$ is a lattice-antichain for~$L$, if and only if~$L$ is a subposet of an $(n+1)$-lattice~$L_A$ in which $0\iscoveredby[L_A] n$ (that is,~$n$ is an atom in~$L_A$) and $\cov[L_A] n = A$ hold.
\end{theorem}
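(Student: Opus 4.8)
The plan is to treat the two implications separately, in both cases by making the extension lattice~$L_A$ fully explicit.

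\emph{Construction and sufficiency.} Given a lattice-antichain~$A$ for~$L$, I would take $L_A=L\cup\{n\}$ and declare $x\isbelow[L_A] y$ to hold exactly when either $x,y\in L$ and $x\isbelow[L] y$; or $y=n$ and $x\in\{0,n\}$; or $x=n$ and $y\in\{n\}\cup\upset[L]{A}$. Checking that $\isbelow[L_A]$ is a partial order is routine: reflexivity is immediate, and antisymmetry and transitivity reduce, after a short case split on which arguments equal~$n$, to the corresponding properties of $\isbelow[L]$ together with the facts that $A\subseteq L\setminus\{0\}$ forces $0\notin\upset[L]{A}$ and that $\upset[L]{A}$ is up-closed in~$L$. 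From the definition, $L$ is a subposet of~$L_A$, the element~$0$ stays least and~$1$ stays greatest, and~$0$ is the only element strictly below~$n$, so $0\iscoveredby[L_A] n$; moreover $\upset[L_A]{n}=\{n\}\cup\upset[L]{A}$, and since~$A$ is an antichain its elements are precisely the minimal elements of $\upset[L]{A}$, whence $\cov[L_A] n=A$. It remains to verify that~$L_A$ is a lattice, and since~$L_A$ is finite with a greatest element it suffices to show that any two elements have a meet. Pairs involving~$n$ are immediate: the common lower bounds of~$n$ and~$y\in L$ are $\{0,n\}$ if $y\in\upset[L]{A}$ and $\{0\}$ otherwise. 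For $x,y\in L$ the common lower bounds in~$L_A$ are those in~$L$, together with~$n$ precisely when $x,y\in\upset[L]{A}$; in that case the lattice-antichain condition gives $x\meet[L] y\in\{0\}\cup\upset[L]{A}$, so $x\meet[L] y$ is comparable with~$n$ in~$L_A$ and the greatest common lower bound exists---it equals $x\meet[L] y$ if $x\meet[L] y\in\upset[L]{A}$, and~$n$ if $x\meet[L] y=0$. This single case is the only use of the lattice-antichain hypothesis and is the heart of the direction.

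\emph{Necessity.} Conversely, let~$L$ be a subposet of an $(n+1)$-lattice~$L_A$ with $0\iscoveredby[L_A] n$ and $\cov[L_A] n=A$. Then~$A$ is a non-empty antichain---covers of a fixed element form an antichain, and~$n$, not being the top element of~$L_A$, has a cover by finiteness---so only the meet condition needs checking. From $0\iscoveredby[L_A] n$ we get $\downset[L_A]{n}=\{0,n\}$, and from $\cov[L_A] n=A$ with finiteness we get $\upset[L_A]{n}=\{n\}\cup\upset[L_A]{A}$, which meets~$L$ exactly in~$\upset[L]{A}$. Fix $a,b\in\upset[L]{A}$; then $n\isbelow[L_A] a$ and $n\isbelow[L_A] b$, hence $n\isbelow[L_A](a\meet[L_A] b)$. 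If $a\meet[L_A] b=n$, the common lower bounds of~$a$ and~$b$ in~$L_A$ form $\downset[L_A]{n}=\{0,n\}$, so those lying in~$L$ are just~$\{0\}$, giving $a\meet[L] b=0$. If instead $m:=a\meet[L_A] b$ lies in~$L$, then $m\in\upset[L_A]{n}\cap L=\upset[L]{A}$; furthermore~$m$ is a lower bound of~$a,b$ in~$L$, and any lower bound of~$a,b$ in~$L$ is also one in~$L_A$, hence below~$m$ there and hence below~$m$ in~$L$, so $m=a\meet[L] b$. In either case $a\meet[L] b\in\{0\}\cup\upset[L]{A}$, which is precisely the lattice-antichain condition.

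I expect the main obstacle to be the meet computation in the sufficiency direction: one must see that inserting~$n$ below all of~$\upset[L]{A}$ can only destroy the lattice property when some $x\meet[L] y$ with $x,y\in\upset[L]{A}$ is a non-zero element lying outside~$\upset[L]{A}$, and that this is exactly what the lattice-antichain condition forbids. The remaining ingredients---the poset axioms for $\isbelow[L_A]$, the description of $\cov[L_A] n$, and the necessity direction (essentially using $0\iscoveredby[L_A] n$ to pin down $\downset[L_A]{n}$ and splitting on whether $a\meet[L_A] b$ lands in~$L$)---are bookkeeping.
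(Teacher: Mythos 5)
Your proof is correct. Note that the paper itself does not prove this statement---it is quoted verbatim from Heitzig--Reinhold (their Lemma~2)---and your argument is the natural reconstruction of that result: the explicit order on $L\cup\{n\}$ obtained by inserting~$n$ above~$0$ and below $\upset[L]{A}$, the verification that the only non-routine meet is $x\meet[L]y$ for $x,y\in\upset[L]{A}$ (which is exactly where the lattice-antichain condition enters, giving meet $x\meet[L]y$ or~$n$ according to whether $x\meet[L]y$ lies in $\upset[L]{A}$ or equals~$0$), and, for the converse, pinning down $\downset[L_A]{n}=\{0,n\}$ from $0\iscoveredby[L_A]n$ and splitting on whether $a\meet[L_A]b$ equals~$n$ or lies in~$L$; all steps, including the identification $\cov[L_A]n=A$ via the minimal elements of $\upset[L]{A}$ and the non-emptiness of $A$ from $n$ not being the top, check out.
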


\begin{remark}\label{R:incremental-unique}\label{R:incremental-weight-depth}
In the situation of \autoref{T:HR-incremental}, it is clear that the pair $(L,A)$ uniquely determines~$L_A$ and vice versa.
Moreover, the covering relation of~$L_A$ is obtained from the covering relation of~$L$ by
\begin{enumerate}[(i)]\addtolength{\itemsep}{-1ex}\vspace{-1ex}
 \item
   adding the pair $(0,n)$;
 \item
   adding all pairs of the form $(n,a)$ for $a\in A$; and
 \item
   removing all pairs of the form $(0,a)$ for $a\in A$ that are present.
\end{enumerate}
As mentioned in \autoref{R:HR-order}, the covers of~$0$ need not be stored explicitly; in this case, only step (ii) is needed.

Indeed, this definition of~$L_A$ makes sense for any $n$-poset~$L$ and any~$A\subseteq L$.
It is obvious from the definitions that one has $\wt_{L_A}(n) = \wt_L(A)$.  Moreover, for $1\le i < n$, we have $i\not\isbelow[L_A] n$ and thus $\wt_{L_A}(i) = \wt_L(i)$ and $\dep[L_A](i) = \dep[L](i)$.
\end{remark}

\medskip\noindent
The following two results are consequences of \autoref{R:incremental-weight-depth} and \autoref{T:canonical-weights}.

\begin{corollary}[{\cite[\S3]{HeitzigReinhold}}]\label{C:HR-incremental}
If~$L$ is an $n$-lattice and~$A$ is a lattice-antichain for~$L$ such that~$L_A$ is $<_\wt$-minimal, then~$L$ is $<_\wt$-minimal.
\end{corollary}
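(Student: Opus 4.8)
The plan is to prove the contrapositive: if $L$ is not $<_\wt$-minimal in its isomorphism class, then neither is $L_A$. So suppose there is a relabelling $\pi \in \Sym(\{0,\ldots,n-1\})$ with $\pi(L) <_\wt L$ (and $\pi$ fixing $0$ and $1$, since these are determined as the bottom and top of the lattice). I would like to extend $\pi$ to a relabelling $\widetilde\pi \in \Sym(\{0,\ldots,n\})$ of $L_A$ by setting $\widetilde\pi(i) = \pi(i)$ for $i < n$ and $\widetilde\pi(n) = n$, and then show $\widetilde\pi(L_A) <_\wt L_A$. By \autoref{R:incremental-weight-depth} applied to both $L_A$ and $\pi(L)_{\pi(A)}$ (noting $\widetilde\pi(L_A) = \pi(L)_{\pi(A)}$ since $\widetilde\pi$ fixes $n$ and moves the covering data of the other elements exactly as $\pi$ does), we have $\wt_{\widetilde\pi(L_A)}(i) = \wt_{\pi(L)}(i)$ and $\wt_{L_A}(i) = \wt_L(i)$ for all $1 \le i < n$. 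Hence the first $n-2$ entries of $\wt(\widetilde\pi(L_A))$ and $\wt(L_A)$ agree with those of $\wt(\pi(L))$ and $\wt(L)$ respectively; since $\wt(\pi(L)) <_\wt \wt(L)$ as tuples of length $n-2$, the tuples of length $n-1$ compare the same way regardless of the final coordinate (the $n$-th coordinate is only consulted if all earlier ones tie, which they do not). Therefore $\widetilde\pi(L_A) <_\wt L_A$, so $L_A$ is not $<_\wt$-minimal.

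Here I should be careful about one point: the corollary as stated presumes $L_A$ is a genuine $(n+1)$-lattice, which is exactly what \autoref{T:HR-incremental} guarantees from the hypothesis that $A$ is a lattice-antichain for $L$. I do not actually need to re-examine whether $\widetilde\pi(L_A)$ is a lattice — it is automatically isomorphic to $L_A$ as a poset via $\widetilde\pi$, hence a lattice — so the lattice-antichain condition is used only implicitly, through \autoref{T:HR-incremental}, to know that $L_A$ exists as a lattice in the first place. The weight comparison itself is purely combinatorial and does not see the lattice structure.

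The main obstacle, such as it is, is bookkeeping rather than mathematical depth: one must check that $\pi$ genuinely fixes the labels $0$ and $1$ (so that the extension $\widetilde\pi$ is well-defined and respects the $n$-poset convention), and that ``$\wt(\pi(L)) <_\wt \wt(L)$ with the same prefix, then append any further coordinate'' really does preserve strict lexicographic order — which it does, since a strict inequality is decided at the first differing coordinate and appending entries to both tuples cannot undo a decision already made. One should also note explicitly, via \autoref{R:incremental-weight-depth}, that passing from $\pi(L)$ to $\widetilde\pi(L_A)$ changes none of the weights $\wt_{\pi(L)}(i)$ for $i<n$, which is the crux of why the argument goes through. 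Everything else is immediate from the definitions.
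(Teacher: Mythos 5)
Your proof is correct and is essentially the argument the paper intends: it states the corollary as an immediate consequence of \autoref{R:incremental-weight-depth}, and your contrapositive — extending a relabelling $\pi$ with $\pi(L)<_\wt L$ by fixing $n$, so that $\wt(\widetilde\pi(L_A))$ and $\wt(L_A)$ are the tuples $\wt(\pi(L))$ and $\wt(L)$ with one coordinate appended, whence the strict lexicographic inequality persists — is exactly that consequence spelled out. (Note that \autoref{T:canonical-weights}, which the paper cites jointly for both corollaries, is not actually needed here; it is used for \autoref{C:HR-incremental-antichain}.)
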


\begin{corollary}[{\cite[\S5]{HeitzigReinhold}}]\label{C:HR-incremental-antichain}
If~$L$ is an $n$-lattice and~$A$ is a lattice-antichain for~$L$ such that~$L_A$ is $<_\wt$-minimal, then~one has $A\cap\big(\lev_{k-1}(L)\cup\lev_k(L)\big)\neq\emptyset$ for $k=\dep[L](n-1)$.
\end{corollary}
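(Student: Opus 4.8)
The plan is to argue by contradiction: suppose $A \cap \big(\lev_{k-1}(L) \cup \lev_k(L)\big) = \emptyset$ for $k = \dep[L](n-1)$, and show that $L_A$ fails to be $<_\wt$-minimal by exhibiting a relabelling that strictly decreases its weight. First I would set up notation: by \autoref{T:canonical-levellised} (i.e.\ \autoref{R:canonical-levellised}), since $L_A$ is $<_\wt$-minimal it is levellised, and by \autoref{R:incremental-weight-depth} the depths of the old elements $1,\dots,n-1$ are unchanged when passing from $L$ to $L_A$, while $\dep[L_A](n) = 1$ (the new element $n$ is an atom). Write $m = n-1$ for the largest old label, so $\dep[L](m) = k$ is the maximal depth occurring among the elements $1, \dots, n-1$, and the bottom level of $L$ consists of the elements with depth $k$, which form a suffix of $\{1, \dots, n-1\}$ by levellisation; call this set $\lev_k(L) = \{a_k, a_k+1, \dots, n-1\}$.

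The key observation is this. Because $A$ is an antichain of atoms' covers — more precisely $\cov[L_A] n = A$ — every element of $A$ lies strictly above $n$ in $L_A$, hence $\dep[L_A](a) \ge 2$ for all $a \in A$, so $A$ contains no atom of $L_A$ other than possibly... actually the cleaner route: each $a \in A$ has $\dep[L_A](a) = \dep[L](a)$, and the hypothesis says none of these depths is $k-1$ or $k$. Now consider the covering set of the old maximal-depth elements: for $i \in \lev_k(L)$ we have $\cov[L_A] i = \cov[L] i$, and these covering sets consist of elements of depth $k-1$. Meanwhile $\cov[L_A] n = A$ consists of elements of depth $\le k-2$ (it cannot contain depth $k-1$ or $k$ by hypothesis, and depth $\ge k$ is impossible since $n$ is covered by elements of strictly smaller depth... wait, $n$ covers nothing and is covered by $A$, so elements of $A$ have depth $< \dep[L_A](n)$? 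No: $n$ is an atom so $\dep[L_A](n)$ is large; elements above $n$ have smaller depth). Let me restate: elements of $A$ sit above the atom $n$, so they have depth strictly less than $k+1 = \dep[L_A](n)$, i.e.\ depth $\le k$; combined with the hypothesis, every element of $A$ has depth $\le k-2$. Therefore $\wt_{L_A}(n) = \wt_L(A)$ is a sum of powers $2^j$ with all $j$ in levels of depth $\le k-2$, whereas for $i \in \lev_k(L)$, $\wt_{L_A}(i) = \wt_L(\cov[L] i)$ is a sum of powers $2^j$ with $j$ ranging over depth-$(k-1)$ elements.

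Now I would derive the contradiction from the ordering constraints. Since $L_A$ is levellised with $n$ an atom (depth $1$) and the elements $a_k, \dots, n-1$ having depth $k \ge 2$, levellisation forces the depth-$1$ level to come first in the labelling — but $n = |L_A| - 1$ is the largest label, so we need $\dep[L_A](n) \le \dep[L_A](j)$ for all $j$, i.e.\ $k+1 \le \dep[L_A](j)$ for every $j < n$; this is absurd unless $L_A$ has only one non-top, non-bottom level, forcing $k = 1$ and hence forcing $n-1$ to be an atom of $L$ as well, at which point $\lev_{k-1}(L) \cup \lev_k(L) = \lev_0(L) \cup \lev_1(L)$ contains $1$ and all atoms, and any nonempty antichain $A \subseteq L \setminus \{0\}$ with $\upset[L]{A} \ni 1$ must meet this set — contradiction. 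The main obstacle, and where I would spend the most care, is handling the depth bookkeeping cleanly: specifically, establishing that under the hypothesis the new atom $n$ would be forced into a level that must (by \autoref{T:canonical-levellised}) be labelled before the existing level-$k$ elements, yet $n$ carries the largest label; making this airtight requires carefully tracking how the depth function on $L_A$ relates to that on $L$ and invoking \autoref{R:incremental-weight-depth} at each point, and in particular ruling out the degenerate small-$k$ cases separately.
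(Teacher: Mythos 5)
Your overall plan (negate the conclusion and contradict $<_\wt$-minimality of~$L_A$) is the right one, but the depth bookkeeping that you yourself flag as the delicate point is in fact wrong in several places, and the argument as written does not close. First, $\dep[L_A](n)=1$ is false: depth in this paper is measured from the top, so the new \emph{atom}~$n$ has $\dep[L_A](n)=1+\max\{\dep[L](a):a\in A\}$ (a coatom would have depth~$1$). Later you instead assert $\dep[L_A](n)=k+1$, which is exactly what fails under your contradiction hypothesis: if $A$ misses $\lev_{k-1}(L)\cup\lev_k(L)$, then every $a\in A$ has $\dep[L](a)\le k-2$ (here you do need that~$L$ itself is levellised, which follows from \autoref{C:HR-incremental} and \autoref{T:canonical-levellised}), hence $\dep[L_A](n)\le k-1$. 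Second, in the final paragraph you apply levellisation with the inequality reversed: since~$n$ carries the \emph{largest} label, being levellised requires $\dep[L_A](j)\le\dep[L_A](n)$ for all $j<n$, not $\dep[L_A](n)\le\dep[L_A](j)$; the ensuing forcing of $k=1$ and the case analysis built on it are spurious. Third, the side claim that for $i\in\lev_k(L)$ the set $\cov[L]i$ consists of depth-$(k-1)$ elements is false in general (only \emph{some} cover of~$i$ has depth $k-1$), and in any case that weight comparison is started but never used to conclude.

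The correct finish is one line once the depths are straight: under the negation, $\dep[L_A](n)=1+\max\{\dep[L](a):a\in A\}\le k-1<k=\dep[L_A](n-1)$ (depths of old elements are unchanged by \autoref{R:incremental-weight-depth}), which contradicts \autoref{T:canonical-levellised} applied to the $<_\wt$-minimal lattice~$L_A$, since $n-1<n$. Alternatively, in the spirit of the paper's attribution to \autoref{T:canonical-weights}: because~$L$ is levellised, every label occurring in~$A$ is smaller than every label in $\lev_{k-1}(L)$, while $\cov[L](n-1)$ contains at least one element of depth $k-1$; hence $\wt_{L_A}(n)=\wt_L(A)<2^{\min\lev_{k-1}(L)}\le\wt_{L_A}(n-1)$, contradicting $\wt_{L_A}(n-1)\le\wt_{L_A}(n)$. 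Either route replaces your final paragraph entirely.
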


\subsection{Testing for canonicity}\label{SS:Canonical}

In the light of \autoref{C:HR-incremental-antichain}, there are two cases to consider for testing whether the descendant $L_A$ of a $<_\wt$-minimal $n$-lattice~$L$ defined by a lattice-antichain~$A$ for~$L$ with $\dep[L](n-1)=k$ is $<_\wt$-minimal:
\begin{enumerate}[(A)]
 \item $A\cap\lev_k(L)\neq\emptyset$, that is, $\dep[L_A](n) = k+1$
 \item $A\cap\lev_k(L)=\emptyset\neq A\cap\lev_{k-1}(L)$, that is, $\dep[L_A](n) = k$
\end{enumerate}

\medskip\noindent
\textbf{Case (A):}

In this case, the new element~$n$ forms a separate level of~$L_A$; cf.\ \autoref{F:HR-large}.
Since~$L_A$ is levellised by construction and the non-minimal elements of~$L$ correspond to the levels $0,\ldots,k$ of~$L_A$, any relabelling~$\pi$ of~$L_A$ for which~$\pi(L_A)$ is levellised must fix~$n$ and induce a relabelling of~$L$ by \autoref{R:relabelling-levellised}.  By \autoref{R:incremental-weight-depth} and the definition of the lexicographic order, $\wt(\pi(L_A)) < \wt(L_A)$ implies $\wt(\pi(L)) \le \wt(L)$.
By \autoref{C:HR-incremental}, the latter implies $\wt(\pi(L)) = \wt(L)$ and thus $\pi(L)=L$.

To test whether~$L_A$ is $<_\wt$-minimal, it is thus sufficient to check that
\begin{equation}\label{EQ:largeAntichainMinimal}
 \wt_L(A) = \min \big\{\wt_L(\pi(A)) : \pi\in\Stab(L)\times\Sym(\{n\}) \cong \Stab(L) \big\}
\end{equation}
holds, again using \autoref{R:incremental-weight-depth}.
The latter condition can, for instance, be verified by computing the orbit $A^{\Stab(L)}$ of~$A$ under the action of $\Stab(L)$ as the closure of the set $\{A\}$ under the action of a generating set for $\Stab(L)$; this is the approach taken in~\cite{HeitzigReinhold}.
Alternatively, one can compute a \emph{canonical labelling} of the lattice~$L_A$ and use it to test whether~$L_A$ lies on a canonical construction path; this is the approach taken in~\cite{JipsenLawless}.

\bigskip\noindent
\textbf{Case (B):}

In this case, the new element~$n$ is added to the lowest existing non-trivial level of~$L$; cf.\ \autoref{F:HR-small}.  Thus, a relabelling~$\pi$ of~$L_A$ for which~$\pi(L_A)$ is levellised \emph{need not} fix~$n$ and\commS{ST: nor} induce a relabelling of~$L$.
It will, however, induce a relabelling of the lattice~$L'$ induced by the levels $0,\ldots,k-1,k+1$ of~$L$ (or~$L_A$), and one has $\pi(L') = L'$ by the same arguments as in the previous case.

If the lowest non-trivial level of~$L_A$ contains the elements $a_k,\ldots,n$, checking whether~$L_A$ is $<_\wt$-minimal means testing that $\big(\wt_{L_A}(a_k),\ldots,\wt_{L_A}(n)\big)$ is lexicographically minimal in its orbit under the group $\Stab(L')\times\Sym(\{a_k,\ldots,n\})$;
the latter can again be done by an explicit computation of the orbit, or by using a canonical labelling.

Observe that one has $\cov[L_A] i \subseteq L'$ for $a_k\le i\le n$, and that the action of $\pi\in\Stab(L')\times\Sym(\{a_k,\ldots,n\})$ not only modifies the individual weights $\wt_{L_A}(i)$ (by relabelling the elements of~$L'$), but also permutes their positions in the sequence (by acting on $\{a_k,\ldots,n\}$).

\begin{figure}[b]
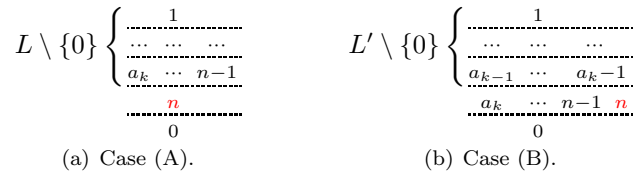
%
\centering
\subfigure[Case~(A).]{\label{F:HR-large}$\begin{array}[t]{@{}c@{}c@{\hspace{0.5em}}c@{\hspace{0.5em}}c@{\,}}
             \multirow{3}{*}{$L\setminus\{0\}\left\{\rule{0pt}{17pt}\right.$} &  & \scriptstyle 1 &
                \\[-0.25ex] \cdashline{2-4}[1pt/1pt]
             & \scriptstyle\cdots & \scriptstyle\cdots & \scriptstyle\cdots
                \rule[6pt]{0pt}{0pt}\\[-0.25ex] \cdashline{2-4}[1pt/1pt]
             & \scriptstyle a_k & \scriptstyle\cdots & \scriptstyle n-1
                \rule[6pt]{0pt}{0pt}\\[-0.25ex] \cdashline{2-4}[1pt/1pt]
             &  & \scriptstyle \textcolor{red}{n}
                \rule[6pt]{0pt}{0pt}\\[-0.25ex] \cdashline{2-4}[1pt/1pt]
             & & \scriptstyle 0
          \end{array}$}\qquad\qquad
\subfigure[Case (B).]{\label{F:HR-small}$\begin{array}[t]{@{}c@{}c@{\hspace{0.5em}}c@{\hspace{0.5em}}c@{\,}}
             \multirow{3}{*}{$L'\setminus\{0\}\left\{\rule{0pt}{17pt}\right.$} &  & \scriptstyle 1 &
                \\[-0.25ex] \cdashline{2-4}[1pt/1pt]
             & \scriptstyle\cdots & \scriptstyle\cdots & \scriptstyle\cdots
                \rule[6pt]{0pt}{0pt}\\[-0.25ex] \cdashline{2-4}[1pt/1pt]
             & \scriptstyle a_{k-1} & \scriptstyle\cdots & \hspace{0.5em}\scriptstyle a_k-1
                \rule[6pt]{0pt}{0pt}\\[-0.25ex] \cdashline{2-4}[1pt/1pt]
             & \scriptstyle a_k & \scriptstyle\cdots & \scriptstyle n-1\hspace{0.5em}\textcolor{red}{\scriptstyle n}
                \rule[6pt]{0pt}{0pt}\\[-0.25ex] \cdashline{2-4}[1pt/1pt]
             & & \scriptstyle 0
          \end{array}$}
\caption{The lattice $L_A$ obtained by adding a new cover~$n$ of~$0$ to the $n$-lattice~$L$.}
\label{F:HR}
\end{figure}

\subsection{Vertically indecomposable lattices}\label{SS:Indecomposable}

\begin{definition}\label{D:indecomposable}
 An $n$-lattice~$L$ is \emph{vertically decomposable}, if there exists an element $i\in L\setminus\{0,1\}$ that is comparable to every other element of~$L$.  Otherwise,~$L$ is \emph{vertically indecomposable}.
\end{definition}

\noindent
One can speed up the construction by restricting to lattices that are vertically indecomposable; a straightforward recursion makes it possible to recover all lattices from the vertically indecomposable ones.
We refer to \cite[\S5]{HeitzigReinhold} for details.

\section{An improved algorithm}\label{S:ImprovedAlgorithm}

The test for minimality of $\wt_L(A)$ or $\big(\wt_{L_A}(a_k),\ldots,\wt_{L_A}(n)\big)$ in their orbit under the acting permutation group is the most time consuming part of the construction and thus an obvious target for improvement.

In \autoref{SS:StabiliserChains}, we sketch the basic idea for a more efficient algorithm, but we will see that the construction of \cite{HeitzigReinhold,JipsenLawless} has to be modified to make this idea work.
We describe our modified construction in \autoref{SS:Levellised}.

\subsection{Stabiliser chain approach}\label{SS:StabiliserChains}

Case (A) from \autoref{SS:Canonical} suggests a possible approach, namely the use of a standard technique from computational group theory: stabiliser chains.

Since $\Stab(L)$ preserves each level of~$L$ by \autoref{R:relabelling-levellised} and~$L$ is levellised, it is tempting to construct and test lattice-antichains level by level:
Defining $S_k := \Stab(L)$ as well as $A_d := A\cap \lev_d(L)$ and
$S_{d-1} := S_d \cap \Stab(A_d)$ for $d = k,\ldots,1$, condition~\eqref{EQ:largeAntichainMinimal} is equivalent to the following condition:
\begin{equation}\label{EQ:largeAntichainMinimal-chain}
 \wt_L(A_d) = \min \big\{\wt_L(\pi(A_d)) : \pi\in S_d \big\} \mbox{\quad for $d = k,\ldots,1$}
\end{equation}
The sets $A_d$ for $d = k,\ldots,1$ can be constructed and tested one at a time, which offers two advantages:
Firstly, if the test at level~$d$ fails, the sets for the levels $d-1,\ldots,1$ don't have to be constructed; an entire branch of the search space is discarded in one step.
Secondly, even if the test succeeds on all levels, the cost of testing condition~\eqref{EQ:largeAntichainMinimal-chain} by computing the orbits $A_d^{S_d}$ for $d=k,\ldots,1$ is proportional to $\sum_{d=1}^k |A_d^{S_d}|$, and thus in general much smaller than  $|A^{\Stab(L)}| = \prod_{d=1}^k |A_d^{S_d}|$, which is the cost of testing condition~\eqref{EQ:largeAntichainMinimal} directly by computing the orbit $A^{\Stab(L)}$.

\bigskip\noindent
However, when trying to use a similar approach for Case (B), we run into problems:
We must compare $$\big(\wt_{L_A}(a_k),\ldots,\wt_{L_A}(n)\big) = \big(\wt_L(a_k),\ldots,\wt_L(n-1),\wt_L(A)\big)$$ lexicographically to its images under the elements of the acting permutation group $\Stab(L')\times\Sym(\{a_k,\ldots,n\})$, but if~$A$ has only been constructed partially, $\wt_{L_A}(n)=\wt_L(A)$ is not completely determined; in the interpretation of \autoref{R:HR-order} and \autoref{F:HR-order}, the leftmost entries of the last row of the binary matrix corresponding to~$L_A$ are undefined.

The elements of the group $\Stab(L')\times\Sym(\{a_k,\ldots,n\})$ can permute the rows of this matrix, so the position of the undefined entries will vary.  Clearly, the lexicographic comparison of the two matrices must stop once it reaches an entry that is undefined in one of the matrices being compared; in this situation, the order of the two matrices cannot be decided on the current level.

The problem is that the position in the matrix at which the lexicographic comparison must stop depends on the relabelling that is applied (cf.\ \autoref{F:HR-layered-small}).
A consequence of this is that the subset of elements of $\Stab(L')\times\Sym(\{a_k,\ldots,n\})$ for which the parts of the matrices that can be compared are equal does not form a subgroup, so applying a stabiliser chain approach is not possible.

\begin{figure}[t]
  \begin{center}
   \begin{tikzpicture}[scale=0.30]
     \draw[fill=lightgray] (0,0) rectangle (3,1);
     \draw[step=1,black,very thin] (0,0) grid (9,9);
     \draw[xstep=3,ystep=9,black,very thick] (0,0) grid (9,9);
     \node at (-0.75,8.5) {$\scriptstyle a_k$};
     \node at (-0.75,0.5) {$\scriptstyle n$};
     \node at (0.5,-0.5) {$\scriptstyle 2$};
     \node at (10.5,-0.5) {$\scriptstyle a_k-1$};
     \draw (8.5,-0.2) -- (8.5,-0.5) -- (9.25,-0.5);
     \draw[-<,cyan] (3.5,0.5) -- (4.5,0.5);
     \draw[cyan] (4.0,0.5) -- (8.5,0.5);
     \draw[cyan,dashed] (8.5,0.5) -- (0.5,1.5);
     \draw[-<,cyan] (0.5,1.5) -- (4.5,1.5);
     \draw[cyan] (4.5,1.5) -- (8.5,1.5);
     \draw[cyan,dashed] (8.5,1.5) -- (0.5,2.5);
     \draw[-<,cyan] (0.5,2.5) -- (4.5,2.5);
     \draw[cyan] (4.5,2.5) -- (8.5,2.5);
     \draw[cyan,dashed] (8.5,2.5) -- (0.5,3.5);
     \draw[-<,cyan] (0.5,3.5) -- (4.5,3.5);
     \draw[cyan] (4.5,3.5) -- (8.5,3.5);
     \draw[cyan,dashed] (8.5,3.5) -- (0.5,4.5);
     \draw[-<,cyan] (0.5,4.5) -- (4.5,4.5);
     \draw[cyan] (4.5,4.5) -- (8.5,4.5);
     \draw[cyan,dashed] (8.5,4.5) -- (0.5,5.5);
     \draw[-<,cyan] (0.5,5.5) -- (4.5,5.5);
     \draw[cyan] (4.5,5.5) -- (8.5,5.5);
     \draw[cyan,dashed] (8.5,5.5) -- (0.5,6.5);
     \draw[-<,cyan] (0.5,6.5) -- (4.5,6.5);
     \draw[cyan] (4.5,6.5) -- (8.5,6.5);
     \draw[cyan,dashed] (8.5,6.5) -- (0.5,7.5);
     \draw[-<,cyan] (0.5,7.5) -- (4.5,7.5);
     \draw[cyan] (4.5,7.5) -- (8.5,7.5);
     \draw[cyan,dashed] (8.5,7.5) -- (0.5,8.5);
     \draw[-<,cyan] (0.5,8.5) -- (4.5,8.5);
     \draw[cyan] (4.5,8.5) -- (8.5,8.5);
   \end{tikzpicture}
   \qquad\qquad
   \begin{tikzpicture}[scale=0.30]
     \draw [fill=lightgray] (0,4) rectangle (3,5);
     \draw[step=1,black,very thin] (0,0) grid (9,9);
     \draw[xstep=3,ystep=9,black,very thick] (0,0) grid (9,9);
     \node at (-0.75,8.5) {$\scriptstyle a_k$};
     \node at (-0.75,0.5) {$\scriptstyle n$};
     \node at (0.5,-0.5) {$\scriptstyle 2$};
     \node at (10.5,-0.5) {$\scriptstyle a_k-1$};
     \draw (8.5,-0.2) -- (8.5,-0.5) -- (9.25,-0.5);
     \draw[-<,cyan] (3.5,4.5) -- (4.5,4.5);
     \draw[cyan] (4.0,4.5) -- (8.5,4.5);
     \draw[cyan,dashed] (8.5,4.5) -- (0.5,5.5);
     \draw[-<,cyan] (0.5,5.5) -- (4.5,5.5);
     \draw[cyan] (4.5,5.5) -- (8.5,5.5);
     \draw[cyan,dashed] (8.5,5.5) -- (0.5,6.5);
     \draw[-<,cyan] (0.5,6.5) -- (4.5,6.5);
     \draw[cyan] (4.5,6.5) -- (8.5,6.5);
     \draw[cyan,dashed] (8.5,6.5) -- (0.5,7.5);
     \draw[-<,cyan] (0.5,7.5) -- (4.5,7.5);
     \draw[cyan] (4.5,7.5) -- (8.5,7.5);
     \draw[cyan,dashed] (8.5,7.5) -- (0.5,8.5);
     \draw[-<,cyan] (0.5,8.5) -- (4.5,8.5);
     \draw[cyan] (4.5,8.5) -- (8.5,8.5);
   \end{tikzpicture}
  \end{center}\vspace{-4.5ex}
  \caption{Lexicographic comparison of a lattice~$L_A$ obtained from a partially constructed lattice-antichain~$A$ (left) and a relabelling (right); only the relevant parts of the matrices are shown.  Thick lines indicate the boundaries between levels.  The parts of the lattice-antichain not yet constructed are shown in grey.}\label{F:HR-layered-small}
\end{figure}
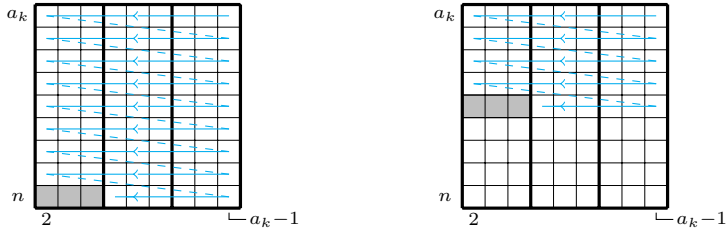

\subsection{Levellised construction}\label{SS:Levellised}

The analysis at the end of the preceding section indicates that the problem is that possible relabellings can swap an element whose covering set is only partially determined with an element whose covering set is completely determined, or in other words, that we add a new element to an existing level of~$L$.

The idea for solving this problem is simple:
Rather than adding one element at a time, possibly to an already existing level, we only ever add an entire level at a time; that way, the problem of adding elements to an existing level is avoided.

\medskip\noindent
To make the stabiliser chain approach work in this setting, we must use a total order that compares parts of the covering sets in the same order in which they are constructed; that is, we have to compare the entries of the matrices describing the covering relations in level-major order.

\begin{notation}\label{N:parentLattice}
 For a levellised $n$-lattice~$L$ with $n>2$ and $\dep[L](n-1)=k$, let~$L'$ denote the lattice induced by the levels $0,\ldots,k-1,k+1$ of~$L$, that is, the lattice obtained from~$L$ by removing its last non-trivial level.
 Note that~$L'$ is a levellised $n'$-lattice for some $n'<n$.
\end{notation}

\medskip\noindent
The total order we are about to define uses the partition of the covering set of each element according to levels.

\begin{definition}\label{D:levellisedCoveringSet}
 Given a levellised $n$-lattice~$L$ with $\dep[L](n-1)=k$, an element $i\in L\setminus\{0\}$ and an integer $d\in\{1,\ldots,k-1\}$, we define $\cov[L][d] i = \big( \cov[L] i \big) \cap \lev_d(L)$.
\end{definition}

\begin{definition}\label{D:lattice-order}
 Using induction on~$n$, we define a relation~$<$ on the set of levellised $n$-lattices that are isomorphic as unlabelled lattices as follows:

 Given $n>2$ and levellised $n$-lattices~$L_1$ and~$L_2$ that are isomorphic as unlabelled lattices, we say $L_1 < L_2$, if one of the following holds:

 \begin{itemize}
  \item
    $L_1' < L_2'$
  \item
    $L_1' = L_2' = L'$ and, denoting $\dep[L_1](n-1)=\dep[L_2](n-1)=k$ and $\lev_k(L_1) = \lev_k(L_2) = \{a_k,\ldots,n-1\}$, there exist $\ell\in\{1,\ldots,k-1\}$ as well as $i\in \{a_k,\ldots,n-1\}$ such that both of the following hold:
    \begin{itemize}[\quad$\circ$]\vspace{-\topsep}
     \item
       $\wt_{L'}\big(\cov[L_1][d] j\big) = \wt_{L'}\big(\cov[L_2][d] j\big)$ if $d>\ell$, or $d=\ell$ and $j\in\{a_k,\ldots,i-1\}$
     \item
       $\wt_{L'}\big(\cov[L_1][\ell] i\big) < \wt_{L'}\big(\cov[L_2][\ell] i\big)$
    \end{itemize}
 \end{itemize}
\end{definition}

\begin{remark}\label{R:lattice-order}
The relation from \autoref{D:lattice-order} corresponds to a level-major lexicographic comparison of the binary matrices describing the covering relations of~$L_1$ and~$L_2$ as illustrated in \autoref{F:lattice-order}.

As~$L_1$ and~$L_2$ are levellised, both matrices are lower block triagonal, with the blocks defined by the levels.
Moreover, as~$L_1$ and~$L_2$ are isomorphic as unlabelled lattices, the block structures of both matrices are identical.

Notice also that the matrix describing the covering relation of~$L_1'$ (respectively~$L_2'$) is obtained from that of~$L_1$ (respectively~$L_2$) by removing the lowest row and the rightmost column of blocks.
 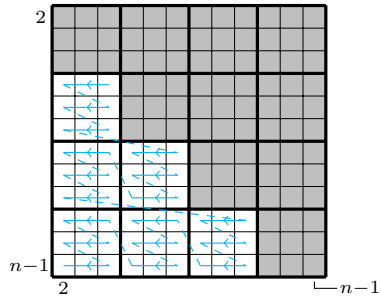
\begin{figure}[b]
    \begin{center}
      \begin{tikzpicture}[scale=0.30]
        \draw [fill=lightgray] (0,9) rectangle (12,12);
        \draw [fill=lightgray] (3,6) rectangle (12,12);
        \draw [fill=lightgray] (6,3) rectangle (12,12);
        \draw [fill=lightgray] (9,0) rectangle (12,12);
        \draw[step=1.0,black,very thin] (0,0) grid (12,12);
        \draw[step=3.0,black,very thick] (0,0) grid (12,12);
        \node at (-0.5,11.5) {$\scriptstyle 2$};
        \node at (-1.0,0.5) {$\scriptstyle n-1$};
        \node at (0.5,-0.5) {$\scriptstyle 2$};
        \node at (13.5,-0.5) {$\scriptstyle n-1$};
        \draw (11.5,-0.2) -- (11.5,-0.5) -- (12.5,-0.5);

        \draw[->,cyan] (2.5,8.5) -- (1.5,8.5);
        \draw[cyan] (1.5,8.5) -- (0.5,8.5);
        \draw[cyan,dashed] (0.5,8.5) -- (2.5,7.5);
        \draw[->,cyan] (2.5,7.5) -- (1.5,7.5);
        \draw[cyan] (1.5,7.5) -- (0.5,7.5);
        \draw[cyan,dashed] (0.5,7.5) -- (2.5,6.5);
        \draw[->,cyan] (2.5,6.5) -- (1.5,6.5);
        \draw[cyan] (1.5,6.5) -- (0.5,6.5);
        \draw[cyan,dashed] (0.5,6.5) -- (5.5,5.5);

        \draw[->,cyan] (5.5,5.5) -- (4.5,5.5);
        \draw[cyan] (4.5,5.5) -- (3.5,5.5);
        \draw[cyan,dashed] (3.5,5.5) -- (5.5,4.5);
        \draw[->,cyan] (5.5,4.5) -- (4.5,4.5);
        \draw[cyan] (4.5,4.5) -- (3.5,4.5);
        \draw[cyan,dashed] (3.5,4.5) -- (5.5,3.5);
        \draw[->,cyan] (5.5,3.5) -- (4.5,3.5);
        \draw[cyan] (4.5,3.5) -- (3.5,3.5);
        \draw[cyan,dashed] (3.5,3.5) -- (2.5,5.5);

        \draw[->,cyan] (2.5,5.5) -- (1.5,5.5);
        \draw[cyan] (2.5,5.5) -- (0.5,5.5);
        \draw[cyan,dashed] (0.5,5.5) -- (2.5,4.5);
        \draw[->,cyan] (2.5,4.5) -- (1.5,4.5);
        \draw[cyan] (1.5,4.5) -- (0.5,4.5);
        \draw[cyan,dashed] (0.5,4.5) -- (2.5,3.5);
        \draw[->,cyan] (2.5,3.5) -- (1.5,3.5);
        \draw[cyan] (1.5,3.5) -- (0.5,3.5);
        \draw[cyan,dashed] (0.5,3.5) -- (8.5,2.5);

        \draw[->,cyan] (8.5,2.5) -- (7.5,2.5);
        \draw[cyan] (7.5,2.5) -- (6.5,2.5);
        \draw[cyan,dashed] (6.5,2.5) -- (8.5,1.5);
        \draw[->,cyan] (8.5,1.5) -- (7.5,1.5);
        \draw[cyan] (7.5,1.5) -- (6.5,1.5);
        \draw[cyan,dashed] (6.5,1.5) -- (8.5,0.5);
        \draw[->,cyan] (8.5,0.5) -- (7.5,0.5);
        \draw[cyan] (7.5,0.5) -- (6.5,0.5);
        \draw[cyan,dashed] (6.5,0.5) -- (5.5,2.5);

        \draw[->,cyan] (5.5,2.5) -- (4.5,2.5);
        \draw[cyan] (4.5,2.5) -- (3.5,2.5);
        \draw[cyan,dashed] (3.5,2.5) -- (5.5,1.5);
        \draw[->,cyan] (5.5,1.5) -- (4.5,1.5);
        \draw[cyan] (4.5,1.5) -- (3.5,1.5);
        \draw[cyan,dashed] (3.5,1.5) -- (5.5,0.5);
        \draw[->,cyan] (5.5,0.5) -- (4.5,0.5);
        \draw[cyan] (4.5,0.5) -- (3.5,0.5);
        \draw[cyan,dashed] (3.5,0.5) -- (2.5,2.5);

        \draw[->,cyan] (2.5,2.5) -- (1.5,2.5);
        \draw[cyan] (2.5,2.5) -- (0.5,2.5);
        \draw[cyan,dashed] (0.5,2.5) -- (2.5,1.5);
        \draw[->,cyan] (2.5,1.5) -- (1.5,1.5);
        \draw[cyan] (1.5,1.5) -- (0.5,1.5);
        \draw[cyan,dashed] (0.5,1.5) -- (2.5,0.5);
        \draw[->,cyan] (2.5,0.5) -- (1.5,0.5);
        \draw[cyan] (1.5,0.5) -- (0.5,0.5);

      \end{tikzpicture}
    \end{center}\vspace{-4.5ex}
    \caption{Interpreting the order $<$ as level-major lexicographic order on the matrices specifying the covering relation.  Thick lines indicate the boundaries between levels.  The entries of the matrix shown in grey are zero.}\label{F:lattice-order}
 \end{figure}
\end{remark}

\begin{lemma}\label{L:lattice-order}
 The relation~$<$ from \autoref{D:lattice-order} is a total order on the set of levellised $n$-lattices that are isomorphic as unlabelled lattices.
\end{lemma}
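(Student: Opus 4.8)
The plan is to establish the defining properties of a strict total order (irreflexivity, antisymmetry, comparability, transitivity) simultaneously by induction on~$n$, using the interpretation of~$<$ from \autoref{R:lattice-order} as a level-major lexicographic comparison. Concretely, I want to reduce everything to the elementary fact that lexicographic comparison of finite sequences of a fixed length, indexed by a fixed ordered index set and valued in a totally ordered alphabet (here the alphabet is~$\NN$, the possible values of the weights $\wt_{L'}(\cdot)$), is a strict total order: the induction supplies the recursive ``prefix'' (the comparison of the parent lattices~$L_i'$), and the tuple of weights in \autoref{D:lattice-order} supplies the ``suffix''.

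First I would check that the recursion is well-posed, i.e.\ that it stays within the class to which the inductive hypothesis applies and that the data being compared lie in a common index set. If~$L_1$ and~$L_2$ are levellised $n$-lattices that are isomorphic as unlabelled lattices, then any order isomorphism between them preserves depth and hence maps each level of~$L_1$ onto the corresponding level of~$L_2$; since both lattices are levellised, each level is a block of consecutive labels, so~$L_1$ and~$L_2$ have the same sequence of level sizes. In particular $\dep[L_1](n-1)=\dep[L_2](n-1)=k$ and $\lev_k(L_1)=\lev_k(L_2)=\{a_k,\ldots,n-1\}$, and restricting the isomorphism to the complements of these top levels shows (via \autoref{N:parentLattice}) that~$L_1'$ and~$L_2'$ are levellised $n'$-lattices for the \emph{same}~$n'<n$ and are again isomorphic as unlabelled lattices. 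Hence the inductive hypothesis applies to~$(L_1',L_2')$, and the index set $\{(d,j):1\le d\le k-1,\ a_k\le j\le n-1\}$ over which \autoref{D:lattice-order} forms its tuple is the same for both lattices.

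Second I would record the reconstruction fact that a levellised $n$-lattice~$L$ (with $k=\dep[L](n-1)$) is uniquely determined by~$L'$ together with the family $\big(\wt_{L'}(\cov[L][d] j)\big)$ over the above index set. By \autoref{R:HR-order} it suffices to recover, for each $i\in\{2,\ldots,n-1\}$, the set $\cov[L] i\setminus\{1\}$. For~$i$ not in the top level, covers of~$i$ are again non-top elements and the covering relation among non-top elements is unaffected by deleting the top level, so this set equals $\cov[L'] i\setminus\{1\}$. For $i=j$ in the top level it equals $\bigcup_{d=1}^{k-1}\cov[L][d] j$ (the covers of~$j$ lie in levels $0,\ldots,k-1$, and level~$0$ contributes only~$1$), and each $\cov[L][d] j\subseteq L'$ is recovered from its weight by reading off the binary expansion. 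Hence, when $L_1'=L_2'$, the tuples in \autoref{D:lattice-order} coincide if and only if $L_1=L_2$; and unwinding the definition, $L_1<L_2$ holds if and only if the tuple of~$L_1$ is strictly smaller than that of~$L_2$ lexicographically, the pairs $(d,j)$ being enumerated as in \autoref{D:lattice-order} (levels~$d$ from $k-1$ down to~$1$, and for each~$d$ the labels~$j$ from~$a_k$ up to $n-1$).

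With these two points in hand the order axioms follow by the routine case split on whether $L_1'=L_2'$. For comparability, if $L_1'\ne L_2'$ the inductive hypothesis orders~$L_1'$ and~$L_2'$, hence (first bullet of \autoref{D:lattice-order}) orders~$L_1$ and~$L_2$; if $L_1'=L_2'$, the two tuples are either equal, whence $L_1=L_2$ by the reconstruction fact, or first differ at some position, where the order on~$\NN$ decides. Irreflexivity and antisymmetry follow from the same split together with irreflexivity and antisymmetry of~$<$ on $n'$-lattices (induction) and of the order on~$\NN$. For transitivity, given $L_1<L_2<L_3$ one splits according to which of $L_1'=L_2'$, $L_2'=L_3'$ hold: in the mixed cases one uses transitivity of~$<$ on $n'$-lattices and the first bullet of \autoref{D:lattice-order}, and when $L_1'=L_2'=L_3'$ one uses transitivity of lexicographic order on the common index set. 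The base case $n=2$ is vacuous, since there is a unique $2$-lattice. I do not expect a genuine obstacle: the only points requiring care are the bookkeeping in the second and third paragraphs --- that passing to~$L'$ preserves ``levellised $n'$-lattice, isomorphic as unlabelled lattices'' together with the block structure, and that~$L'$ and the weight tuple really do reconstruct~$L$ --- without which the recursive comparison and the implication ``equal tuples $\Rightarrow$ equal lattices'' would be unjustified.
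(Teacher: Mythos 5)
Your proposal is correct and follows essentially the same route as the paper's (terse) proof: the substance in both cases is the trichotomy/reconstruction observation that, once $L_1'=L_2'$, the level-wise weights $\wt_{L'}\big(\cov[L_s][d] j\big)$ determine the covering sets (with the covered-only-by-$1$ case handled exactly as you do via $\cov[L] i\setminus\{1\}$), so equal tuples force $L_1=L_2$, while transitivity and the remaining axioms are the routine lexicographic bookkeeping you spell out. Your write-up merely makes explicit the well-posedness of the recursion (isomorphic levellised lattices share their level structure, so $L_1'$ and $L_2'$ are comparable by induction), which the paper leaves implicit.
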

\begin{proof}
 Verifying transitivity is routine.  Trichotomy holds, as for $s\in\{1,2\}$ and $i\in\{2,\ldots,n-1\}$ with
 $\dep[L_1](i) = \dep[L_2](i) = D$, one has
 \[
    \cov[L_s] i =
      \left\{ \begin{array}{ll}
        \bigcup\limits_{d=1}^{D-1} \cov[L_s][d] i
            & \text{;\quad if $\bigcup\limits_{d=1}^{D-1} \cov[L_s][d] i \ne \emptyset$} \\[4ex]
        \{1\} & \text{;\quad if $\bigcup\limits_{d=1}^{D-1} \cov[L_s][d] i = \emptyset$}
      \end{array}\right.
 \]
 whence $\cov[L_1] i = \cov[L_2] i$ holds in the situation $L_1' = L_2' = L'$ if and only if one has $\wt_{L'}\big(\cov[L_1][d] i\big) = \wt_{L'}\big(\cov[L_2][d] i\big)$ for $d=1,\ldots,D-1$.
\end{proof}

\begin{definition}\label{D:canonical}
An $n$-lattice~$L$ is \emph{canonical} if~$L$ is levellised and $<$-minimal among all levellised $n$-lattices that are isomorphic to~$L$ as unlabelled lattices.
\end{definition}

\begin{lemma}\label{L:canonical}
Every isomorphism class of unlabelled lattices on~$n$ elements contains a unique canonical $n$-lattice.
\end{lemma}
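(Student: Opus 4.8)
The plan is to separate existence from uniqueness and to note that uniqueness is immediate once \autoref{L:lattice-order} is available. Fix an isomorphism class~$\mathcal{C}$ of unlabelled lattices on~$n$ elements, and let~$\mathcal{L}$ be the set of levellised $n$-lattices lying in~$\mathcal{C}$; there are only finitely many $n$-lattices altogether, so~$\mathcal{L}$ is finite. If~$L_1,L_2\in\mathcal{C}$ are both canonical, then both are levellised and hence lie in~$\mathcal{L}$, and each is $<$-minimal in~$\mathcal{L}$; since~$<$ is a total order on~$\mathcal{L}$ by \autoref{L:lattice-order}, neither $L_1<L_2$ nor $L_2<L_1$ can hold, so $L_1=L_2$. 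It therefore remains only to show $\mathcal{L}\neq\emptyset$: then the finite nonempty totally ordered set~$\mathcal{L}$ has a least element, which is canonical by \autoref{D:canonical}.

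To exhibit a levellised representative, I would start from an arbitrary $n$-lattice~$L\in\mathcal{C}$ and relabel its elements as follows: keep the labels~$0$ and~$1$ on the bottom and the top element of~$L$ (in any $n$-lattice these are forced to be the bottom and the top), and distribute the labels $2,\ldots,n-1$ over the remaining $n-2$ elements in some order of non-decreasing depth, breaking ties arbitrarily. Write~$M$ for the resulting $n$-lattice; it still lies in~$\mathcal{C}$. Since depth is determined by the covering relation and hence is an isomorphism invariant, $\dep[M](i)\le\dep[M](j)$ holds for $2\le i\le j\le n-1$ by construction, and $\dep[M](1)=0\le\dep[M](j)$ for every~$j$; thus~$M$ is levellised, i.e.\ $M\in\mathcal{L}$. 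The single point that deserves a word is consistency at the bottom end: one has $\dep[L](0)>\dep[L](a)$ for every $a\neq 0$, since a maximal chain from~$1$ to such an~$a$ can always be prolonged downward by~$0$; hence~$0$ occupies the deepest level by itself, and placing it at label~$0$ does not clash with the depth-order chosen for the middle elements.

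I expect no real difficulty here. All of the genuine content — that the level-major comparison of \autoref{D:lattice-order} is well defined and totally orders a single isomorphism class, which is where the block-triangular structure of the covering matrices is used — has already been isolated in \autoref{L:lattice-order}; the rest is the bookkeeping above. The only things worth checking are that~$\mathcal{L}$ is finite and nonempty (both done), so that talking about its least element is legitimate, together with the short observation about~$\dep[L](0)$.
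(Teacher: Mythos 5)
Your proof is correct and follows essentially the same route as the paper: the paper's proof simply notes that the set of levellised representatives in the class is non-empty and finite and invokes \autoref{L:lattice-order} for totality, leaving the existence of a levellised relabelling as "clear". Your explicit depth-ordered relabelling just fills in that detail (correctly, since the levellised condition only constrains labels $1,\ldots,n-1$), so there is no substantive difference.
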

\begin{proof}
The set of representatives that are levellised $n$-lattices is clearly non-empty and finite, and~$<$ is a total order on this set by \autoref{L:lattice-order}.
\end{proof}

\medskip\noindent
The following \autoref{T:incremental-parent} is an analogue of \autoref{C:HR-incremental}.

\begin{lemma}\label{L:sublattice}
If $L$ is a levellised $(n+1)$-lattice for $n>1$, then $\overline{L} = L\setminus\{n\}$ is a levellised
$n$-lattice and $\dep[\overline{L}](i) = \dep[L](i)$ holds for $i=1,\ldots,n-1$.
Moreover, for $a,b\in \overline{L}$ one has the following identities:
 \[
    a \join[\overline{L}] b = a \join[L] b
    \quad\text{and}\quad
    a \meet[\overline{L}] b = \left\{ \begin{array}{ll}
                                         a \meet[L] b & \text{;\quad if $a \meet[L] b \ne n$} \\
                                         0            & \text{;\quad if $a \meet[L] b = n$}
                                      \end{array} \right.
 \]
\end{lemma}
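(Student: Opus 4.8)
The plan is to begin by locating~$n$ within~$L$: I claim~$n$ is an atom of~$L$. Since~$L$ is levellised, taking $j=n$ in the defining inequality from \autoref{N:lattice} gives $\dep[L](i)\le\dep[L](n)$ for every~$i$ with $0<i\le n$; write $k=\dep[L](n)$. If some $x\in L$ satisfied $x\isbelow[L] n$ with $x\notin\{0,n\}$, then necessarily $x\in\{2,\ldots,n-1\}$ (one cannot have $x=1$, since~$1$ is an upper bound of~$L$ and $n>1$), and extending a maximal chain from~$1$ to~$n$ downwards by~$x$ would give a chain from~$1$ to~$x$ with $k+2$ elements, forcing $\dep[L](x)\ge k+1$ and contradicting the inequality above. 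Hence $\downset[L]{n}=\{0,n\}$, i.e.\ $0\iscoveredby[L] n$; everything else will follow from this.

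Next I would read off the assertions about depth and levellisedness. For $i\in\{1,\ldots,n-1\}$, no chain in~$L$ from~$1$ to~$i$ can pass through~$n$, because every element that is $\isbelow[L] n$ lies in $\{0,n\}$; hence the chains from~$1$ to~$i$ in~$L$ and in $\overline{L}=L\setminus\{n\}$ coincide, so $\dep[\overline{L}](i)=\dep[L](i)$. The inequality $\dep[\overline{L}](i)\le\dep[\overline{L}](j)$ for $0<i\le j<n$ is then inherited from~$L$, so~$\overline{L}$ is levellised; as~$\overline{L}$ also has elements labelled $0,\ldots,n-1$ with~$0$ a lower bound and~$1$ an upper bound, it only remains to check that it is a lattice, which will fall out of the meet computation.

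I would then compute meets directly. Fix $a,b\in\overline{L}$ and set $c=a\meet[L] b$; the common lower bounds of~$a$ and~$b$ in~$L$ form $\downset[L]{a}\cap\downset[L]{b}=\downset[L]{c}$. If $c=n$, this set equals $\{0,n\}$, so in~$\overline{L}$ only~$0$ survives and $a\meet[\overline{L}] b=0$. If $c\ne n$, then~$c$ still lies in $\downset[L]{c}\setminus\{n\}$ and is $\isabove[L]$ every element of it, hence~$c$ is the greatest element of the set of common lower bounds of~$a$ and~$b$ in~$\overline{L}$, i.e.\ $a\meet[\overline{L}] b=c$. In either case the meet exists; as~$\overline{L}$ is finite with greatest element~$1$, this makes~$\overline{L}$ a lattice, and the two cases are precisely the claimed formula for~$\meet[\overline{L}]$. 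For joins I would observe that the upper bounds of~$\{a,b\}$ in~$L$ and in~$\overline{L}$ coincide: the only way~$n$ could be an upper bound of~$\{a,b\}$ is $a,b\in\downset[L]{n}=\{0,n\}$, that is $a=b=0$ in~$\overline{L}$, and then $a\join[L] b=0=a\join[\overline{L}] b$ anyway. Since the two sets of upper bounds agree and their common least element lies in~$\overline{L}$, it is simultaneously $a\join[L] b$ and $a\join[\overline{L}] b$.

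The only step that needs a genuine idea is the first one, namely that levellisedness forces~$n$ to be an atom; after that the argument is routine bookkeeping comparing $\downset[L]{\cdot}$ and $\upset[L]{\cdot}$ with their counterparts in~$\overline{L}$. The one point of care is that deleting the atom~$n$ from a set of common lower bounds never removes that set's greatest element, since the greatest element can be~$n$ only in the degenerate case $a\meet[L] b=n$, which is handled separately.
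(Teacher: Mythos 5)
Your argument is correct and follows the same route as the paper: the key point in both is that levellisedness forces $\dep[L](i)\le\dep[L](n)$ for $0<i<n$, so no such $i$ lies below~$n$ (i.e.\ $n$ is an atom), after which the depth equalities, levellisedness of~$\overline{L}$, and the join/meet identities are the same routine verification that the paper leaves to the reader and you carry out explicitly.
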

\begin{proof}
 As~$L$ is levellised, $1\le i < n$ implies $\dep[L](i) \le \dep[L](n)$, so $i\not\isbelow[L]n$, and hence $\dep[\overline{L}](i) = \dep[L](i)$.  As~$L$ is levellised, so is~$\overline{L}$.

 A routine verification shows the identities for $a \join[\overline{L}] b$ and $a \meet[\overline{L}] b$; in particular, $\overline{L}$ is an $n$-lattice.
\end{proof}

\begin{theorem}\label{T:incremental-parent}
If~$L$ is a canonical $n$-lattice, then~$L'$ is canonical.
\end{theorem}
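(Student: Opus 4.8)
The plan is to argue by contradiction: if $L'$ were not canonical, we would lift a strictly smaller representative of the isomorphism class of $L'$ to a strictly smaller representative of the class of $L$, contradicting the canonicity of $L$.

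By \autoref{N:parentLattice}, $L'$ is a levellised $n'$-lattice; the case $n'=2$ is immediate since there is only one levellised $2$-lattice, so assume $n'>2$. Suppose $L'$ is not canonical. Since $L'$ is levellised and $<$ is a total order on the levellised $n'$-lattices isomorphic to $L'$ as unlabelled lattices (\autoref{L:lattice-order}), there is a levellised $n'$-lattice $M$ with $M\cong L'$ as unlabelled lattices and $M<L'$. It suffices to produce a levellised $n$-lattice $\widetilde L$ with $\widetilde L\cong L$ as unlabelled lattices and $\widetilde L'=M$: then the first clause of \autoref{D:lattice-order} gives $\widetilde L<L$, contradicting the $<$-minimality required of $L$ by \autoref{D:canonical}.

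To build $\widetilde L$, put $k=\dep[L](n-1)$. As $L$ is levellised and canonical, $\dep[L]$ is non-decreasing on $\{1,\dots,n-1\}$, the element $0$ has depth $k+1$, the last non-trivial level is $\lev_k(L)=\{a_k,\dots,n-1\}$, and $L'$ has underlying label set $\{0,\dots,a_k-1\}$; hence $n'=a_k$ and $M$ has the same label set. Fix an isomorphism $f\colon L'\to M$ of unlabelled lattices (necessarily $f(0)=0$, $f(1)=1$), and define a bijection $g$ of $\{0,\dots,n-1\}$ by $g=f$ on $\{0,\dots,a_k-1\}$ and $g=\mathrm{id}$ on $\{a_k,\dots,n-1\}$. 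Let $\widetilde L$ be the structure on $\{0,\dots,n-1\}$ whose partial order is the transport of $\isbelow[L]$ along $g$, so that $g\colon L\to\widetilde L$ is an order isomorphism by construction. Then $\widetilde L$ is a lattice, it is isomorphic to $L$ as an unlabelled lattice, and $g(0)=0$, $g(1)=1$ are its bottom and top, so $\widetilde L$ is an $n$-lattice. One now checks that $\widetilde L$ is levellised and that $\widetilde L'=M$: order isomorphisms preserve depth, so $\dep[\widetilde L](g(x))=\dep[L](x)$; moreover $\dep[L'](x)=\dep[L](x)$ for every $x\in L'\setminus\{0\}$, since any maximal chain from $1$ to $x$ in $L$ has strictly increasing depths, all bounded by $\dep[L](x)\le k-1$, and so meets neither $\lev_k(L)$ nor $\{0\}$ and lies in $L'$. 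Combining these with $f$ being an isomorphism shows $\dep[\widetilde L](j)=\dep[M](j)\le k-1$ for $j\in\{1,\dots,a_k-1\}$ and $\dep[\widetilde L](j)=k$ for $j\in\{a_k,\dots,n-1\}$, so $\widetilde L$ is levellised with last non-trivial level $\{a_k,\dots,n-1\}$; and for $p,q\in\{0,\dots,a_k-1\}$ one has $p\isbelow[\widetilde L]q\iff g^{-1}(p)\isbelow[L]g^{-1}(q)\iff g^{-1}(p)\isbelow[L']g^{-1}(q)\iff p\isbelow[M]q$ (using that $L'$ is an induced subposet of $L$ and $f$ is an isomorphism), so $\widetilde L'=M$ as labelled lattices. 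This yields the contradiction, and hence $L'$ is canonical.

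The only substantive point is the verification in the last step that the transported structure is again a \emph{levellised} $n$-lattice whose parent is exactly $M$; that it is a lattice isomorphic to $L$ is automatic once one works with the transport of the order. The crux is therefore the depth bookkeeping — that deleting and then re-attaching the bottom non-trivial level is compatible with the levellised labelling — and this rests entirely on the identity $\dep[L']=\dep[L]$ on $L'\setminus\{0\}$; everything else is routine.
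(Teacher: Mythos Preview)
Your proof is correct and follows essentially the same approach as the paper: argue by contradiction, take a relabelling of $L'$ witnessing non-canonicity, extend it by the identity on the bottom non-trivial level, and obtain a levellised $n$-lattice strictly smaller than $L$. The paper dispatches the extension step in one sentence (``we can trivially extend~$\pi'$ to a relabelling~$\pi$ of~$L$ such that $(\pi(L))'=\pi'(L')<L'$''), whereas you carefully verify the depth bookkeeping and that $\widetilde{L}'=M$; this is exactly the content the paper leaves implicit.
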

\begin{proof}
Iterated application of \autoref{L:sublattice} shows that~$L'$ is levellised.

Assume that~$\pi'$ is a relabelling of~$L'$ such that $\pi'(L')$ is levellised and one has $\pi'(L')<L'$.
We can trivially extend~$\pi'$ to a relabelling~$\pi$ of~$L$ such that $\big(\pi(L)\big)' = \pi'(L') < L'$, contradicting the assumption that~$L$ is canonical.
\end{proof}

\medskip\noindent
\autoref{T:incremental-parent} means that we can again construct a tree of canonical $n$-lattices in a depth-first manner; the root of the tree is the unique 2-lattice, and an $(n+m)$-lattice~$\widetilde{L}$ is a descendant of a canonical $n$-lattice~$L$, if $\widetilde{L}$ is obtained from~$L$ by adding a new level consisting of~$m$ new covers of~$0$ (labelled~$n,\ldots,n+m-1$) and~$\widetilde{L}$ is canonical.

The lattice~$\widetilde{L}$ is determined by~$L$ and the covering sets of the new elements~$n,\ldots,n+m-1$; the possible choices for the latter can again be characterised effectively using lattice-antichains, although this time, extra compatibility conditions are needed.  The following \autoref{T:incremental}, a generalisation of \autoref{T:HR-incremental}, makes this precise.

\begin{notation}\label{N:incremental}
Given $m\in\NN^+$, an $n$-poset~$L$, and $A_n,\ldots,A_{n+m-1}\subseteq L\setminus\{0\}$, let
$L_{A_n,\ldots,A_{n+m-1}} = \big(\cdots\big(L_{A_n}\big)\cdots\big)_{A_{n+m-1}}$ denote the $(n+m)$-poset obtained from~$L$ by adding~$m$ new atoms $n,\ldots,n+m-1$ with $\cov[L_{A_n,\ldots,A_{n+m-1}}] i = A_i$ for $i=n,\ldots,n+m-1$.
(See \autoref{R:incremental-weight-depth}.)
\end{notation}

\begin{lemma}\label{L:incremental}
Let~$L$ be a levellised $n$-poset with $\dep[L](n-1)=k$, let $m\in\NN^+$, and let $A_i\subseteq L\setminus\{0\}$ for $i=n,\ldots,n+m-1$.  The following are equivalent:
\begin{enumerate}[\upshape (A)]
 \item
   $\widetilde{L} = L_{A_n,\ldots,A_{n+m-1}}$ is a levellised $(n+m)$-poset with $\dep[\widetilde{L}](a)=\dep[L](a) \le k$ for $1\le a < n$ and $\dep[\widetilde{L}](n)=\ldots=\dep[\widetilde{L}](n+m-1)=k+1$.
 \item
   $A_i \cap \lev_k(L) \ne \emptyset$ holds for $n \le i < n+m$.
\end{enumerate}
\end{lemma}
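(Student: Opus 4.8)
The plan is to read off the structure of $\widetilde L = L_{A_n,\ldots,A_{n+m-1}}$ directly from \autoref{R:incremental-weight-depth}, applied $m$ times in succession. First I would record the facts that hold \emph{unconditionally}. By \autoref{R:incremental-weight-depth}, passing from $L_{A_n,\ldots,A_j}$ to $L_{A_n,\ldots,A_{j+1}}$ adds the element $j+1$ as an atom with covering set $A_{j+1}$ and changes neither the covering sets nor the depths of the elements already present (each $A_{j+1}\subseteq L\setminus\{0\}$, so the newly added atom lies below nothing new). Iterating, $\widetilde L$ is an $(n+m)$-poset in which $n,\ldots,n+m-1$ are pairwise incomparable atoms with $\cov[\widetilde{L}] i = A_i$, and $\dep[\widetilde{L}](a)=\dep[L](a)$ for $1\le a<n$; since $L$ is levellised with $\dep[L](n-1)=k$, this gives $\dep[\widetilde{L}](a)\le k$ for $1\le a<n$. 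In particular, the clause ``$\dep[\widetilde{L}](a)=\dep[L](a)\le k$ for $1\le a<n$'' of (A) is automatic and may be set aside.

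Next I would compute the depths of the new elements. For $i\ne 1$ one has $\dep[\widetilde{L}](i)=1+\max\{\dep[\widetilde{L}](x):x\in\cov[\widetilde{L}] i\}$, and substituting $\cov[\widetilde{L}] i = A_i$ together with $\dep[\widetilde{L}](x)=\dep[L](x)$ for $x\in A_i\subseteq L\setminus\{0\}$ yields $\dep[\widetilde{L}](i)=1+\max\{\dep[L](x):x\in A_i\}$ for $n\le i<n+m$. Every $x\in L\setminus\{0\}$ satisfies $0\le\dep[L](x)\le k$, so $1\le\dep[\widetilde{L}](i)\le k+1$, with $\dep[\widetilde{L}](i)=k+1$ if and only if $A_i$ contains an element of depth $k$, i.e.\ $A_i\cap\lev_k(L)\ne\emptyset$.

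Finally I would assemble the equivalence. Assume (B). Then by the previous paragraph $\dep[\widetilde{L}](i)=k+1$ for every $n\le i<n+m$; combined with $\dep[\widetilde{L}](1)\le\cdots\le\dep[\widetilde{L}](n-1)\le k$ this shows that the depths of $\widetilde L$ are non-decreasing in the labelling, so $\widetilde L$ is levellised, and together with the unconditional facts above this is precisely (A). Conversely, (A) asserts in particular that $\dep[\widetilde{L}](i)=k+1$ for $n\le i<n+m$, and the depth formula then forces $A_i\cap\lev_k(L)\ne\emptyset$, which is (B). There is no real obstacle here; the only point needing a little care is the bookkeeping in the $m$-fold iteration of \autoref{R:incremental-weight-depth}---verifying that adding the atoms $j+1,\ldots,n+m-1$ after $j$ disturbs neither $\cov[\widetilde{L}] j$ nor $\dep[\widetilde{L}] j$---and the realisation that the first clause of (A) and the levellisedness of $\widetilde L$ come out as free consequences rather than genuine hypotheses.
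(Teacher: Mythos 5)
Your proposal is correct and takes essentially the same route as the paper's proof: both iterate \autoref{R:incremental-weight-depth} to see that the depths of the elements of $L$ are unchanged and that $n,\ldots,n+m-1$ become atoms with covering sets $A_i$, observe that such an atom has depth $k+1$ in $\widetilde{L}$ exactly when $A_i\cap\lev_k(L)\neq\emptyset$ (and depth at most $k+1$ always), and deduce that $\widetilde{L}$ is levellised precisely in that case. Your explicit formula $\dep[\widetilde{L}](i)=1+\max\{\dep[L](x):x\in A_i\}$ is just a spelled-out version of the paper's ``by construction'' step, so there is no substantive difference.
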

\begin{proof}
 As~$L$ is levellised, one has $\dep[L](a) \le \dep[L](n-1) = k$ for all $a\in L\setminus\{0\}$, and induction using \autoref{R:incremental-weight-depth} shows that $\dep[\widetilde{L}](a) = \dep[L](a) \le k$ holds for all $a\in L\setminus\{0\}$, which implies $\dep[\widetilde{L}](i)\le k+1$ for $i=n,\ldots,n+m-1$ by construction.
 Thus, for any $i=n,\ldots,n+m-1$, one has $\dep[\widetilde{L}](i)=k+1$ if and only if $\emptyset \ne \big(\cov[\widetilde{L}] i\big) \cap \lev_k(\widetilde{L}) = A_i \cap \lev_k(\widetilde{L}) = A_i \cap \lev_k(L)$ holds.
 Finally, as~$L$ is levellised, $\dep[\widetilde{L}](n)=\ldots=\dep[\widetilde{L}](n+m-1)=k+1$ implies that~$\widetilde{L}$ is levellised.
\end{proof}

\begin{theorem}\label{T:incremental}
Let~$L$ be a levellised $n$-lattice with $\dep[L](n-1)=k$, let $m\in\NN^+$, and let $A_i\subseteq L\setminus\{0\}$ for $i=n,\ldots,n+m-1$.  The following are equivalent:
\begin{enumerate}[\upshape (A)]
 \item
   $\widetilde{L} = L_{A_n,\ldots,A_{n+m-1}}$ is a levellised $(n+m)$-lattice, $\dep[\widetilde{L}](a)=\dep[L](a) \le k$ for $1\le a < n$, and $\dep[\widetilde{L}](n)=\ldots=\dep[\widetilde{L}](n+m-1)=k+1$.
 \item
   \begin{enumerate}[\upshape(i)]
    \item
         $A_i \cap \lev_k(L) \ne \emptyset$ holds for $n \le i < n+m$;
    \item
      $A_i$ is a lattice-antichain for~$L$ for $n \le i < n+m$; and
    \item
      if $a,b\in (\upset[L]A_i) \cap (\upset[L]A_j)$ for $n \le i < j< n+m$, then $a \meet[L] b \ne 0$.
   \end{enumerate}
\end{enumerate}
\end{theorem}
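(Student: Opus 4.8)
The plan is to reduce to \autoref{L:incremental} and then isolate the extra content of the word ``lattice''. Condition~(A) here is exactly condition~(A) of \autoref{L:incremental} together with the single additional requirement that~$\widetilde{L}$ be a \emph{lattice} rather than merely a poset, while condition~(B) here is condition~(B) of \autoref{L:incremental} --- namely item~(i) --- together with items~(ii) and~(iii). (We may take each~$A_i$ to be a non-empty antichain of $L\setminus\{0\}$: otherwise no $(n+m)$-poset has $\cov[\widetilde{L}] i = A_i$, so~(A) fails, and~(B) fails too because~$A_i$ is then not a lattice-antichain.) In view of \autoref{L:incremental}, it therefore suffices to prove the following: \emph{if~$\widetilde{L}$ is a levellised $(n+m)$-poset with $\dep[\widetilde{L}](a)=\dep[L](a)$ for $1\le a<n$ and $\dep[\widetilde{L}](n)=\ldots=\dep[\widetilde{L}](n+m-1)=k+1$, then~$\widetilde{L}$ is a lattice if and only if items~(ii) and~(iii) of~(B) hold}. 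So assume henceforth that~$\widetilde{L}$ has this poset structure.

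From \autoref{R:incremental-weight-depth} and \autoref{N:incremental} I would first record the shape of~$\widetilde{L}$: the new elements $n,\ldots,n+m-1$ are pairwise incomparable atoms of~$\widetilde{L}$ with $\downset[\widetilde{L}]{i}=\{0,i\}$ and $\upset[\widetilde{L}]{i}=\{i\}\cup\upset[L]{A_i}$ for each such~$i$, and the order of~$\widetilde{L}$ restricted to~$L$ agrees with that of~$L$; in particular, for $z\in L$ a new atom~$i$ satisfies $i\isbelow[\widetilde{L}]z$ if and only if $z\in\upset[L]{A_i}$. Since~$\widetilde{L}$ is finite and has greatest element~$1$, it is a lattice if and only if every pair of its elements has a greatest lower bound. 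Pairs involving a new atom cause no trouble: two distinct new atoms, and a new atom~$i$ together with an element $z\in L\setminus\upset[L]{A_i}$, have greatest lower bound~$0$, while a new atom~$i$ together with an element $z\in\upset[L]{A_i}$ (or with itself) has greatest lower bound~$i$. Hence~$\widetilde{L}$ is a lattice if and only if every pair $a,b\in L$ has a greatest lower bound in~$\widetilde{L}$.

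Now fix $a,b\in L$ and put $g_0=a\meet[L]b$, which exists precisely because~$L$ is a lattice --- the one point at which this hypothesis, absent from \autoref{L:incremental}, is used. The lower bounds of $\{a,b\}$ in~$\widetilde{L}$ are the elements of $\downset[L]{g_0}$ together with those new atoms~$i$ for which $a,b\in\upset[L]{A_i}$. If $g_0\ne 0$, then~$g_0$ is itself one of these lower bounds and no new atom lies above~$g_0$, so $\{a,b\}$ has a greatest lower bound in~$\widetilde{L}$ exactly when~$g_0$ dominates every new atom~$i$ with $a,b\in\upset[L]{A_i}$, i.e.\ when $g_0\in\upset[L]{A_i}$ whenever $a,b\in\upset[L]{A_i}$; quantified over all $a,b$ this is exactly the statement that each~$A_i$ is a lattice-antichain for~$L$ (\autoref{D:lattice-antichain}), which is item~(ii). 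If $g_0=0$, then the lower bounds of $\{a,b\}$ are~$0$ together with the new atoms~$i$ with $a,b\in\upset[L]{A_i}$; since distinct new atoms are incomparable, a greatest lower bound exists if and only if there is at most one such~$i$, and quantified over all $a,b\in L$ with $a\meet[L]b=0$ this is exactly item~(iii). Combining the two cases with \autoref{L:incremental} gives the equivalence of~(A) and~(B).

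The one step I expect to require care is the forward implication of the case $g_0\ne 0$: there one must check that the greatest lower bound~$g$ of $\{a,b\}$ in~$\widetilde{L}$ actually lies in~$L$, so that it may be compared with~$g_0$ and shown to equal it (whence $g_0\in\upset[L]{A_i}$). This holds because~$g$ lies above~$g_0\ne 0$ and above a new atom, while the only elements above a given new atom are that atom itself and elements of~$L$; so~$g$ is neither~$0$ nor a new atom, hence $g\in L$ --- and this is precisely where one uses that~$L$, not merely~$\widetilde{L}$, is a lattice. Everything else is routine bookkeeping with the description of~$\widetilde{L}$ above.
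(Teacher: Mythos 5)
Your proof is correct, but it takes a genuinely different route from the paper. The paper proceeds by induction on~$m$: it peels off the last set $A_{n+m-1}$, applies the known one-element result (\autoref{T:HR-incremental}) to the intermediate lattice $L^\circ=L_{A_n,\ldots,A_{n+m-2}}$, and uses \autoref{L:sublattice} to transfer meets between~$L$, $L^\circ$ and~$\widetilde{L}$; condition (B)(iii) then emerges from comparing $\meet[L^\circ]$ with $\meet[L]$ (a pair with $a\meet[L]b=0$ but two new atoms below it would destroy the lattice property of the intermediate extensions). You instead argue directly: after using \autoref{L:incremental} for the depth/levellised bookkeeping exactly as the paper does, you test lattice-hood of~$\widetilde{L}$ by checking existence of pairwise greatest lower bounds, disposing of pairs involving new atoms, and splitting pairs $a,b\in L$ according to whether $a\meet[L]b$ is nonzero (which yields exactly the lattice-antichain condition~(ii)) or zero (which yields exactly condition~(iii), since two incomparable new atoms below $\{a,b\}$ would leave no greatest lower bound). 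This avoids both the induction on~$m$ and any appeal to \autoref{T:HR-incremental} or \autoref{L:sublattice} --- in effect you reprove the relevant direction of \autoref{T:HR-incremental} as the special case $m=1$ --- at the cost of a more hands-on description of the lower bounds in~$\widetilde{L}$; your closing paragraph correctly identifies and settles the one delicate point there (that when $a\meet[L]b\ne 0$ an existing greatest lower bound must lie in~$L$ and hence equal $a\meet[L]b$). Your parenthetical dismissal of $A_i$ that are empty or not antichains is acceptable and matches the level of care in the paper, which likewise leaves this degenerate case implicit in \autoref{N:incremental} and \autoref{T:HR-incremental}. In short: the paper's proof is shorter given the cited background results, while yours is more self-contained and makes more transparent where each of (ii) and (iii) comes from.
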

\begin{proof}
 We use induction on~$m$.
 In the case $m=1$, condition~(B)(iii) is vacuous and, by \autoref{T:HR-incremental}, the poset~$\widetilde{L} = L_{A_n}$ is a lattice if and only if~$A_n$ is a lattice-antichain for~$L$.  Together with \autoref{L:incremental}, the claim for $m=1$ is shown.

 Let $m>1$ and consider $L^\circ = L_{A_n,\ldots,A_{n+m-2}}$ and $A = A_{n+m-1}$; we have $\widetilde{L} = (L^\circ)_A$.
 By \autoref{L:incremental}, we can assume $\dep[\widetilde{L}](a) = \dep[L^\circ](a) = \dep[L](a) \le k$ for $1\le a < n$ and $\dep[\widetilde{L}](n)=\ldots=\dep[\widetilde{L}](n+m-1)=\dep[L^\circ](n)=\ldots=\dep[L^\circ](n+m-2)=k+1$.
 In particular, $\upset[\widetilde{L}]{A} = \upset[L^\circ]{A} = \upset[L]{A}$ holds.
 Also, for $i=n,\ldots,n+m-1$ and $a\in L$, we have $i\isbelow[\widetilde{L}] a$ if and only if $a\in \upset[L]A_i$ holds.

 \medskip\noindent
 First assume that (A) holds.

 Induction using \autoref{L:sublattice} shows that the sets $A_n,\ldots,A_{n+m-2}$ are lattice-antichains for~$L$, and $a,b\in (\upset[L]A_i) \cap (\upset[L]A_j)$ for $n \le i < j< n+m-1$ implies $a \meet[L] b \ne 0$.
 Further, by \autoref{T:HR-incremental}, the set $A$ is a lattice-antichain for $L^\circ$,
 which means that for $a,b\in \upset[L^\circ]{A} = \upset[L]{A}$, one has
 $a\meet[L^\circ] b \in \{0\}\cup \upset[L^\circ]{A} =\{0\}\cup \upset[L]{A}$.

 Let $a,b \in \upset[L]A$.
 If $a \meet[L] b\neq0$ holds, we have $a \meet[L] b = a \meet[L^\circ] b$ by repeated application of \autoref{L:sublattice}, whence $a\meet[L] b \in \upset[L]{A}$.
 Thus,~$A$ is a lattice-antichain for $L$.
 On the other hand, if $a \meet[L] b = 0$ holds, then $n+m-1$ is a maximal common lower bound of~$a$ and~$b$ in~$\widetilde{L}$, and the lattice property of~$\widetilde{L}$ then implies that $i\not\isbelow[\widetilde{L}] a$ or $i\not\isbelow[\widetilde{L}] b$, that is $a\notin \upset[L]A_i$ or $b\notin \upset[L]A_i$, holds for all $n \le i <n+m-1$.
 In particular, condition~(B)(iii) holds.
 Together with \autoref{L:incremental}, we have thus shown~(B).

 \medskip\noindent
 Now assume that (B) holds.

 By induction, $L^\circ$ is a levellised $(n+m-1)$-lattice.
 Let $a,b \in \upset[L^\circ]{A} = \upset[L]A$.
 If we have $a\meet[L]b \ne 0$, then $a\meet[L^\circ]b = a\meet[L]b \in \upset[L]A = \upset[L^\circ]{A}$ holds, using \autoref{L:sublattice} and the fact that~$A$ is a lattice-antichain for~$L$ by assumption.
 On the other hand, $a\meet[L]b =0$ implies $a\meet[L^\circ]b = 0$, as by assumption, there is no $i\in\{n,\ldots,n+m-2\}$ such that $a,b\in\upset[L]{A_i}$ holds.
 Thus,~$A$ is a lattice-antichain for~$L^\circ$, whence~$\widetilde{L}$ is a lattice by \autoref{T:HR-incremental}.
 Together with \autoref{L:incremental}, we have thus shown~(A).
\end{proof}

\begin{notation}\label{N:levellised-weights}
Let~$L$ be a levellised $n$-lattice with $\dep[L](n-1)=k$, let $m\in\NN^+$, and let $A_i\subseteq L\setminus\{0\}$ for
$i=n,\ldots,n+m-1$.  For $d=1,\ldots,k$ we define $\LWT[L][d](A_n,\ldots,A_{n+m-1})$ as the sequence
\[
   \Big( \wt_L\big(A_n \cap \lev_d(L)\big) \,,\, \ldots \,,\, \wt_L\big(A_{n+m-1} \cap \lev_d(L)\big) \Big)
\]
and we define the sequence $\LWT[L](A_n,\ldots,A_{n+m-1})$ as the concatenation of
$\LWT[L][k](A_n,\ldots,A_{n+m-1}),\ldots,\LWT[L][1](A_n,\ldots,A_{n+m-1})$ in this order.

\end{notation}

\begin{theorem}\label{T:levellised-weights}
Let~$L$ be a levellised $n$-lattice with $\dep[L](n-1)=k$, let $m\in\NN^+$, and assume that $A_i\subseteq L\setminus\{0\}$ for $i=n,\ldots,n+m-1$ satisfy condition~(B) from \autoref{T:incremental}.
Then $L_{A_n,\ldots,A_{n+m-1}}$ is a canonical  $(n+m)$-lattice if and only if:
\begin{enumerate}[\upshape(i)]
 \item
   $L$ is canonical; and
 \item
   the sequence $\LWT[L](A_n,\ldots,A_{n+m-1})$ is lexicographically minimal under the action of
   $\Stab(L)\times\Sym(\{n,\ldots,n+m-1\})$ given by
   \begin{align*}
      \pi\Big( \LWT[L](A_n,\ldots,A_{n+m-1}) \Big)
      = \LWT[L]\Big(\pi\big(A_{\pi^{-1}(n)}\big),\ldots,\pi\big(A_{\pi^{-1}(n+m-1)}\big)\Big)
   \end{align*}
   for $\pi\in \Stab(L)\times\Sym(\{n,\ldots,n+m-1\})$.
\end{enumerate}
\end{theorem}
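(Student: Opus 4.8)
The plan is to prove the equivalence by setting up a dictionary between the order~$<$ of \autoref{D:lattice-order}, restricted to those levellised $(n+m)$-lattices whose parent equals~$L$, and the lexicographic order on the concatenated sequences $\LWT[L]$, and then to read off both implications from this dictionary together with \autoref{T:incremental-parent}. The first step is to collect the structural facts about $\widetilde{L} := L_{A_n,\ldots,A_{n+m-1}}$. By \autoref{T:incremental} (direction (B)$\Rightarrow$(A)), $\widetilde{L}$ is a levellised $(n+m)$-lattice whose last non-trivial level is $\lev_{k+1}(\widetilde{L}) = \{n,\ldots,n+m-1\}$; iterating \autoref{L:sublattice} and using \autoref{R:incremental-weight-depth} gives $\widetilde{L}' = L$, $\cov[\widetilde{L}] i = A_i$ for $n\le i<n+m$, and $\cov[\widetilde{L}] i = \cov[L] i$ for $1\le i<n$. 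Since $\dep[\widetilde{L}](i) = k+1\ge 2$, no element $i\in\{n,\ldots,n+m-1\}$ is covered by~$1$, so each~$A_i$ lies in $\bigcup_{d=1}^{k}\lev_d(L) = L\setminus\{0,1\}$; as $\wt_L$ is injective, the assignment $(A_n,\ldots,A_{n+m-1})\mapsto\LWT[L](A_n,\ldots,A_{n+m-1})$ is therefore injective on such tuples. (The degenerate case $k=0$, i.e.\ $n=2$, is handled directly: then $\LWT[L]$ is the empty sequence, condition~(ii) is vacuous, and the isomorphism class of~$\widetilde{L}$ has a single levellised representative.)

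Next I would establish the dictionary. Let~$M$ be a levellised $(n+m)$-lattice isomorphic to~$\widetilde{L}$ as an unlabelled lattice, and fix an isomorphism $\phi\colon\widetilde{L}\to M$. Since~$\phi$ preserves depth it maps $\lev_{k+1}(\widetilde{L})$ onto $\lev_{k+1}(M)$; as~$M$ is levellised these are the $m$~largest labels, so $\lev_{k+1}(M)=\{n,\ldots,n+m-1\}$, and~$\phi$ restricts to a level-preserving isomorphism $\widetilde{L}'\to M'$, whence~$M'$ is a levellised $n$-lattice isomorphic to~$L$. If moreover $M'=L$, then~$\phi$ restricts to an automorphism of~$L$ and to a permutation of $\{n,\ldots,n+m-1\}$, so $\phi\in\Stab(L)\times\Sym(\{n,\ldots,n+m-1\})$ and $M=\phi(\widetilde{L})$; conversely, for any~$\pi$ in that group, $\pi(\widetilde{L})$ is levellised with $\big(\pi(\widetilde{L})\big)'=L$ by \autoref{R:relabelling-levellised}, and $\cov[\pi(\widetilde{L})] i=\pi\big(A_{\pi^{-1}(i)}\big)$, so $\LWT[L]$ of its last-level data equals $\pi\big(\LWT[L](A_n,\ldots,A_{n+m-1})\big)$ in the sense of~(ii). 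Finally, whenever $M'=L$, \autoref{D:lattice-order} and \autoref{R:lattice-order} show that $\widetilde{L}<M$ holds exactly when $\LWT[L](A_n,\ldots,A_{n+m-1})$ is lexicographically smaller than $\LWT[L]\big(\cov[M]{n},\ldots,\cov[M]{n+m-1}\big)$, and $\widetilde{L}=M$ holds exactly when these two sequences coincide (using the injectivity from the previous paragraph together with $\cov[\widetilde{L}] i=\cov[L] i=\cov[M] i$ for $1\le i<n$).

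With the dictionary in hand both implications are short. For ``$\Rightarrow$'': if~$\widetilde{L}$ is canonical then $L=\widetilde{L}'$ is canonical by \autoref{T:incremental-parent}, which is~(i); and for every $\pi\in\Stab(L)\times\Sym(\{n,\ldots,n+m-1\})$ the lattice $\pi(\widetilde{L})$ is a levellised lattice isomorphic to~$\widetilde{L}$, so $\widetilde{L}\le\pi(\widetilde{L})$, which by the dictionary says $\LWT[L](A_n,\ldots,A_{n+m-1})\le\pi\big(\LWT[L](A_n,\ldots,A_{n+m-1})\big)$ lexicographically; as~$\pi$ was arbitrary this is~(ii). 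For ``$\Leftarrow$'': assume (i) and~(ii) and let~$M$ be any levellised $(n+m)$-lattice isomorphic to~$\widetilde{L}$. If $M'\ne L$, then~$M'$ is a levellised $n$-lattice isomorphic to~$L$, so $L<M'$ by~(i), i.e.\ $\widetilde{L}'<M'$, hence $\widetilde{L}<M$ by \autoref{D:lattice-order}. If $M'=L$, then $M=\pi(\widetilde{L})$ for some~$\pi$ in the group, and (ii) with the dictionary yields $\widetilde{L}\le M$. In either case $\widetilde{L}\le M$, so~$\widetilde{L}$ is $<$-minimal in its isomorphism class, i.e.\ canonical.

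I expect the main obstacle to be the dictionary step: faithfully matching the recursive, level-by-level comparison of \autoref{D:lattice-order} (which peels off one level at a time and, inside the last level, runs through $n,\ldots,n+m-1$ in order) with the flat lexicographic order on the concatenated sequence $\LWT[L]$, and in particular verifying the ``equal sequences $\Rightarrow$ equal lattices'' clause, which rests on the inclusion $A_i\subseteq L\setminus\{0,1\}$ and on the fact that the covering relations of~$\widetilde{L}$ and~$M$ already agree on the elements $1,\ldots,n-1$. The remaining steps are routine bookkeeping with \autoref{R:relabelling-levellised}, \autoref{R:incremental-weight-depth} and \autoref{T:incremental-parent}.
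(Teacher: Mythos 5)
Your proposal is correct and follows essentially the same route as the paper: both hinge on \autoref{T:incremental-parent} for condition (i), on identifying the relabellings that preserve levellisation and fix the parent~$L$ with elements of $\Stab(L)\times\Sym(\{n,\ldots,n+m-1\})$, and on the observation that the level-major order of \autoref{D:lattice-order}, restricted to lattices with parent~$L$, is exactly the lexicographic order on $\LWT[L]$. The only difference is organisational (you package this correspondence as an explicit dictionary and argue both implications directly, while the paper argues by contradiction/contraposition), so the mathematical content coincides.
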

\begin{proof}
 As any $\pi\in \Stab(L)\times\Sym(\{n,\ldots,n+m-1\})$ induces a relabelling of the elements of~$L$, the action is well-defined.
 Moreover, defining $\widetilde{L} = L_{A_n,\ldots,A_{n+m-1}}$, \autoref{T:incremental} implies
 $\cov[\strut\widetilde{L}][d] i = A_i \cap \lev_d(L)$ for $i=n,\ldots,n+m-1$ and $1\le d\le k$.%

 \medskip\noindent
 First assume that~$\widetilde{L}$ is canonical.
 By \autoref{T:incremental-parent},~$L=\widetilde{L}{\raisebox{2pt}{$'$}}$ is canonical, so~(i) holds.
 If~(ii) does not hold, there exist $\pi\in \Stab(L)\times\Sym(\{n,\ldots,n+m-1\})$ as well as $\ell\in\{1,\ldots,k\}$ and  $i\in \{n,\ldots,n+m-1\}$ such that one has
  \begin{itemize}[\quad--]
    \item
      $\wt_L\big(\pi\big(A_{\pi^{-1}(j)}\big) \cap \lev_d(L)\big) = \wt_L\big(A_j \cap \lev_d(L)\big)$ if $d>\ell$, or $d=\ell$ and $j\in\{n,\ldots,i-1\}$; and
    \item
      $\wt_L\big(\pi\big(A_{\pi^{-1}(i)}\big) \cap \lev_\ell(L)\big) < \wt_L\big(A_i \cap \lev_\ell(L)\big)$.
  \end{itemize}\vspace{-0.75ex}
 By \autoref{T:incremental}, $\widehat{L} = L_{\pi\left(A_{\pi^{-1}(n)}\right),\ldots,\pi\left(A_{\pi^{-1}(n+m-1)}\right)}$ is a levellised $(n+m)$-lattice,
 and $\widehat{L}{\raisebox{2pt}{$'$}} = L = \widetilde{L}{\raisebox{2pt}{$'$}}$ holds by construction.
 Hence, the above conditions mean that~$\widetilde{L}$ is not canonical, contradicting the assumption.  Thus~(ii) holds.

 \medskip\noindent
 Conversely, if~$\widetilde{L}$ is not canonical, there is a relabelling~$\pi$ of~$\widetilde{L}$ such that $\widehat{L} = \pi(\widetilde{L})$ is levellised and $\widehat{L} < \widetilde{L}$ holds.
 As~$\pi$ acts on the levels of~$\widetilde{L}$ it induces a relabelling of~$L$, and we have
 $\widehat{L}{\raisebox{2pt}{$'$}} = \big(\pi(\widetilde{L})\big)' = \pi\big(\widetilde{L}{\raisebox{2pt}{$'$}}\big) = \pi(L)$, which is levellised as well.
 If~(i) holds, we cannot have $\pi(L) < L$, so $\widehat{L} < \widetilde{L}$ implies that one has
 $\widehat{L}{\raisebox{2pt}{$'$}} = \widetilde{L}{\raisebox{2pt}{$'$}} = L$ and
 there exist $\ell\in\{1,\ldots,k\}$ as well as $i\in \{n,\ldots,n+m-1\}$ such that both of the following hold:
 \begin{itemize}[\quad$\circ$]
   \item
     $\wt_L\big(\cov[\strut\widehat{L}][d] j\big) = \wt_L\big(\cov[\strut\widetilde{L}][d] j\big)$
     if $d>\ell$, or $d=\ell$ and $j\in\{n,\ldots,i-1\}$
   \item
     $\wt_L\big(\cov[\strut\widehat{L}][\ell] i\big) < \wt_L\big(\cov[\strut\widetilde{L}][\ell] i\big)$.
 \end{itemize}
 As we have $\cov[\strut\widehat{L}][d] j = \pi(A_{\pi^{-1}(j)}) \cap \lev_d(L)$ for $j=n,\ldots,n+m-1$ and $1\le d\le k$,
 the above conditions imply that
 \[
   \LWT[L]\Big(\pi\big(A_{\pi^{-1}(n)}\big),\ldots,\pi\big(A_{\pi^{-1}(n+m-1)}\big)\Big)
       = \pi\Big( \LWT[L](A_n,\ldots,A_{n+m-1}) \Big)
 \]
 is lexicographically smaller than $\LWT[L](A_n,\ldots,A_{n+m-1})$.
 Since $\pi(L) = \widehat{L}{\raisebox{2pt}{$'$}} = L$, we have $\pi\in \Stab(L)\times\Sym(\{n,\ldots,n+m-1\})$, so (ii) does not hold.
\end{proof}

\begin{corollary}\label{C:levellised-weights}
Let~$L$ be a levellised $n$-lattice with $\dep[L](n-1)=k$, let $m\in\NN^+$, and assume that $A_i\subseteq L\setminus\{0\}$ for $i=n,\ldots,n+m-1$ satisfy condition~(B) from \autoref{T:incremental}.
Then $L_{A_n,\ldots,A_{n+m-1}}$ is a canonical  $(n+m)$-lattice if and only if:
\begin{enumerate}[\upshape(i)]
 \item
   $L$ is canonical; and
 \item
   for $d=k,\ldots,1$, the sequence $\LWT[L][d](A_n,\ldots,A_{n+m-1})$ is lexicographically minimal under the action of
   $S_d$ given by
   \begin{align*}
      \pi\Big( \LWT[L][d](A_n,\ldots,A_{n+m-1}) \Big)
      = \LWT[L][d]\Big(\pi\big(A_{\pi^{-1}(n)}\big),\ldots,\pi\big(A_{\pi^{-1}(n+m-1)}\big)\Big)
   \end{align*}
   for $\pi\in S_d$, where we define $S_k = \Stab(L)\times\Sym(\{n,\ldots,n+m-1\})$ and
   $S_{d-1} = S_d \cap \Stab\big(\LWT[L][d](A_n,\ldots,A_{n+m-1})\big)$ for $d=k,\ldots,2$.
\end{enumerate}
\end{corollary}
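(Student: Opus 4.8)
The plan is to read this off \autoref{T:levellised-weights}. That theorem already says that $L_{A_n,\ldots,A_{n+m-1}}$ is canonical precisely when (i)~$L$ is canonical and the sequence $\LWT[L](A_n,\ldots,A_{n+m-1})$ is lexicographically minimal in its orbit under $G:=\Stab(L)\times\Sym(\{n,\ldots,n+m-1\})$. Since condition~(i) of the corollary is the same, it suffices to prove that condition~(ii) of the corollary is equivalent to that single minimality statement. This is a purely formal fact about lexicographic minimality of a concatenation of blocks under a group action that respects the blocks, and I would isolate it as such.

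Concretely, I would write $w_d:=\LWT[L][d](A_n,\ldots,A_{n+m-1})$ for $d=1,\ldots,k$, so that by \autoref{N:levellised-weights} the sequence $\LWT[L](A_n,\ldots,A_{n+m-1})$ is the concatenation $w_k w_{k-1}\cdots w_1$. Comparing the action formula in \autoref{T:levellised-weights}(ii) with that in the corollary's~(ii), and using that every $\pi\in G$ maps each level of~$L$ to itself (\autoref{R:relabelling-levellised}), the $G$-action is blockwise: $\pi$ sends $w_k w_{k-1}\cdots w_1$ to $\pi(w_k)\pi(w_{k-1})\cdots\pi(w_1)$, with $\pi(w_d)$ given by the formula in~(ii). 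In particular, the chain $S_k=G$, $S_{d-1}=S_d\cap\Stab(w_d)$ satisfies $S_d=\{\pi\in G:\pi(w_e)=w_e\text{ for all }e>d\}$.

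With this in hand, the equivalence follows by a short induction along the chain, in both directions. For the ``only if'' direction, if $w_k\cdots w_1$ is $G$-minimal then, fixing $d$ and taking any $\pi\in S_d$, the image $\pi(w_k\cdots w_1)$ agrees with $w_k\cdots w_1$ on the prefix $w_k\cdots w_{d+1}$, so lexicographic minimality forces $w_d\le\pi(w_d)$ lexicographically; hence each $w_d$ is $S_d$-minimal, which is the corollary's~(ii). For the ``if'' direction, given $\pi\in G=S_k$ I would compare $w_k\cdots w_1$ with its image block by block: $S_k$-minimality of $w_k$ gives $w_k\le\pi(w_k)$, and if this is strict we are done, whereas if $w_k=\pi(w_k)$ then $\pi\in S_{k-1}$ and we pass to $w_{k-1}$, and so on; after at most $k$ steps we obtain $w_k\cdots w_1\le\pi(w_k\cdots w_1)$, so $w_k\cdots w_1$ is $G$-minimal. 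I expect no genuine obstacle here; the one point that needs care is verifying that the action is truly blockwise, i.e.\ that $S_d$ is exactly the pointwise stabiliser in $G$ of the higher blocks $w_k,\ldots,w_{d+1}$, since that is what keeps every step of the induction inside the stabiliser chain — everything else is routine lexicographic bookkeeping.
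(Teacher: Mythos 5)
Your proposal is correct and follows the same route as the paper: the paper's proof simply observes that, since $\LWT[L](A_n,\ldots,A_{n+m-1})$ is the concatenation of the blocks $\LWT[L][k],\ldots,\LWT[L][1]$, condition (ii) of \autoref{T:levellised-weights} is equivalent to condition (ii) of the corollary. Your two-directional induction along the chain $S_k\supseteq S_{k-1}\supseteq\cdots$ just spells out explicitly the blockwise-lexicographic bookkeeping that the paper treats as immediate, and your check that the action is blockwise (via \autoref{R:relabelling-levellised}) is the right point to verify.
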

\begin{proof}
Since the sequence $\LWT[L](A_n,\ldots,A_{n+m-1})$ is the concatenation of the sequences
$\LWT[L][k](A_n,\ldots,A_{n+m-1}),\ldots,\LWT[L][1](A_n,\ldots,A_{n+m-1})$ in this order,
condition~(ii) from \autoref{T:levellised-weights} is equivalent to condition~(ii) of this corollary.
\end{proof}

\begin{remark}\label{R:levellised-weights}
 \autoref{C:levellised-weights} makes it possible to construct the lattice-antichains $A_n,\ldots,A_{n+m-1}$ level by level:
 The comparison at step~$d$ in condition~(ii) only involves the elements of $A_n,\ldots,A_{n+m-1}$ that live on the level~$d$ of~$L$.  In particular, the benefits of using stabiliser chains mentioned in \autoref{SS:StabiliserChains} apply:
 \begin{enumerate}[(a)]
  \item
    If the test at level~$d$ fails, the levels $d-1,\ldots,1$ do not have to be constructed; an entire branch of the search space is discarded in one step.
  \item
    The cost of testing condition~(ii) of \autoref{C:levellised-weights} is in general much smaller than the cost of testing condition~(ii) of \autoref{T:levellised-weights}:
    The former is proportional to
    \[
       \sum_{d=1}^k \Big|\big(\LWT[L][d](A_n,\ldots,A_{n+m-1})\big)^{S_d}\Big|
       \;,
    \]
    while the latter is proportional to
    \[
       \Big|\big(\LWT[L](A_n,\ldots,A_{n+m-1})\big)^{S_k}\Big|
           = \prod_{d=1}^k \Big|\big(\LWT[L][d](A_n,\ldots,A_{n+m-1})\big)^{S_d}\Big|
       \;.
    \]
 \end{enumerate}
 \autoref{F:levellised-weights} shows the comparisons that are made when testing one step of condition~(ii) of \autoref{C:levellised-weights}.  Note that a reordering of the rows of the matrix does not change the position at which the lexicographic comparison stops; this property is necessary for the stabiliser chain approach to work.
 \begin{figure}[b]
  \begin{center}
   \begin{tikzpicture}[scale=0.30]
     \draw[fill=lightgray] (0,0) rectangle (3,9);
     \draw[fill=black] (6,0) rectangle (9,9);
     \draw[step=1.0,lightgray,thin] (0,0) grid (9,9);
     \draw[step=1.0,black,thin] (0,0) grid (9,9);
     \draw[xstep=3,ystep=9,black,very thick] (0,0) grid (9,9);
     \node at (-0.5,8.5) {$\scriptstyle n$};
     \node at (-1.8,0.5) {$\scriptstyle n+m-1$};
     \node at (0.5,-0.5) {$\scriptstyle 2$};
     \node at (10.5,-0.5) {$\scriptstyle n-1$};
     \draw (8.5,-0.2) -- (8.5,-0.5) -- (9.25,-0.5);
     \draw[->,cyan] (5.5,8.5) -- (4.5,8.5);
     \draw[cyan] (4.5,8.5) -- (3.5,8.5);
     \draw[cyan,dashed] (3.5,8.5) -- (5.5,7.5);
     \draw[->,cyan] (5.5,7.5) -- (4.5,7.5);
     \draw[cyan] (4.5,7.5) -- (3.5,7.5);
     \draw[cyan,dashed] (3.5,7.5) -- (5.5,6.5);
     \draw[->,cyan] (5.5,6.5) -- (4.5,6.5);
     \draw[cyan] (4.5,6.5) -- (3.5,6.5);
     \draw[cyan,dashed] (3.5,6.5) -- (5.5,5.5);
     \draw[->,cyan] (5.5,5.5) -- (4.5,5.5);
     \draw[cyan] (4.5,5.5) -- (3.5,5.5);
     \draw[cyan,dashed] (3.5,5.5) -- (5.5,4.5);
     \draw[->,cyan] (5.5,4.5) -- (4.5,4.5);
     \draw[cyan] (4.5,4.5) -- (3.5,4.5);
     \draw[cyan,dashed] (3.5,4.5) -- (5.5,3.5);
     \draw[->,cyan] (5.5,3.5) -- (4.5,3.5);
     \draw[cyan] (4.5,3.5) -- (3.5,3.5);
     \draw[cyan,dashed] (3.5,3.5) -- (5.5,2.5);
     \draw[->,cyan] (5.5,2.5) -- (4.5,2.5);
     \draw[cyan] (4.5,2.5) -- (3.5,2.5);
     \draw[cyan,dashed] (3.5,2.5) -- (5.5,1.5);
     \draw[->,cyan] (5.5,1.5) -- (4.5,1.5);
     \draw[cyan] (4.5,1.5) -- (3.5,1.5);
     \draw[cyan,dashed] (3.5,1.5) -- (5.5,0.5);
     \draw[->,cyan] (5.5,0.5) -- (4.5,0.5);
     \draw[cyan] (4.5,0.5) -- (3.5,0.5);
   \end{tikzpicture}
  \end{center}\vspace{-4.5ex}
  \caption{Lexicographic comparison of a lattice~$L_{A_n,\ldots,A_{n+m-1}}$ obtained from a sequence of partially constructed lattice-antichains and a relabelling; only the relevant part of the matrix is shown.  Thick lines indicate the boundaries between levels.  Parts of the lattice-antichains not yet constructed are shown in grey.  Parts of the lattice-antichains known to coincide are shown in black.}\label{F:levellised-weights}
 \end{figure}
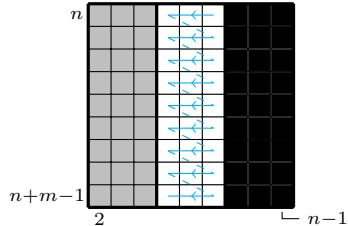
\end{remark}

\subsection{Vertically indecomposable lattices}\label{SS:LevellisedIndecomposable}

Restricting the construction to vertically indecomposable lattices is very easy:
By the following lemma, the only modification required is to avoid adding a new level that contains a single element whose covering set equals the lowest non-trivial level of the given lattice in any step of the construction.

\begin{lemma}\label{L:indecomposable}
 Let~$L$ be a levellised $n$-lattice, let $m\in\NN^+$, and let $A_i\subseteq L\setminus\{0\}$ for $i=n,\ldots,n+m-1$ satisfy condition~(B) from \autoref{T:incremental}.
 \begin{enumerate}[\upshape(a)]
  \item
    If~$L$ is vertically decomposable, then $L_{A_n,\ldots,A_{n+m-1}}$ is vertically decomposable.
  \item
    If~$L$ is vertically indecomposable, then $L_{A_n,\ldots,A_{n+m-1}}$ is vertically decomposable if and only if $m=1$ and $\upset[L]{A_n} = L\setminus\{0\}$ hold.
 \end{enumerate}
\end{lemma}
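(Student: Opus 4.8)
The plan is to treat parts (a) and (b) separately, after first recording two easy structural observations about $\widetilde{L} := L_{A_n,\ldots,A_{n+m-1}}$ that I would use repeatedly. Iterating \autoref{T:HR-incremental} (equivalently, invoking \autoref{T:incremental}), $L$ is a subposet of $\widetilde{L}$, so $\isbelow[\widetilde{L}]$ restricts to $\isbelow[L]$ on $L$; moreover the new elements $n,\ldots,n+m-1$ are atoms of $\widetilde{L}$ with $\cov[\widetilde{L}] i = A_i$, hence $\downset[\widetilde{L}]{i} = \{0,i\}$ and $\upset[\widetilde{L}]{i} = \{i\}\cup\upset[L]{A_i}$. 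From these I draw two consequences: a cut point of $\widetilde{L}$ that happens to lie in $L$ is automatically comparable to all of $L$; and any two distinct new atoms $i\ne j$ are incomparable in $\widetilde{L}$, since $j\notin\upset[\widetilde{L}]{i}$ and $i\notin\upset[\widetilde{L}]{j}$. I would also note that $L$ being levellised forces $\dep[L](x)\le\dep[L](n-1)=k$ for every $x\in\{2,\ldots,n-1\}$.

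For part (a), I would show that any cut point $c\in L\setminus\{0,1\}$ of $L$ is still a cut point of $\widetilde{L}$. Since the order on $L$ is unchanged and $c$ is comparable to $0$, it suffices to check $c$ is comparable to each new atom $j$. Condition (B)(i) supplies an element $a\in A_j\cap\lev_k(L)$. As $c$ is comparable to $a$ in $L$, either $a\isbelow[L] c$, or $c\isbelow[L] a$ with $c\neq a$; the latter would give $\dep[L](c) > \dep[L](a) = k$, which is impossible. So $a\isbelow[L] c$, and combined with $j\iscoveredby[\widetilde{L}] a$ this gives $j\isbelow[\widetilde{L}] c$. Hence $c$ is comparable to everything in $\widetilde{L}$, so $\widetilde{L}$ is vertically decomposable.

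For part (b), I would assume $L$ is vertically indecomposable. For the ``if'' direction: if $m=1$ and $\upset[L]{A_n} = L\setminus\{0\}$, then $\upset[\widetilde{L}]{n} = \{n\}\cup(L\setminus\{0\}) = \widetilde{L}\setminus\{0\}$, so $n$ lies below every element of $\widetilde{L}$ other than $0$ and above $0$; since $n\notin\{0,1\}$, $\widetilde{L}$ is vertically decomposable. For the ``only if'' direction: suppose $\widetilde{L}$ has a cut point $c$. By the first consequence above, $c\notin L$ (otherwise $L$ would be decomposable), so $c$ is a new atom; by the second, if $m\ge 2$ some other new atom is incomparable to $c$, a contradiction, so $m=1$ and $c=n$. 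Finally, $n$ being a cut point means every $x\in\widetilde{L}\setminus\{0\}$ with $x\neq n$ satisfies $x\isabove[\widetilde{L}] n$ (it cannot lie below $n$, as $\downset[\widetilde{L}]{n}=\{0,n\}$), so $x\in\upset[L]{A_n}$; as $\widetilde{L}\setminus\{0,n\}=L\setminus\{0\}$, this yields $L\setminus\{0\}\subseteq\upset[L]{A_n}$, while $\upset[L]{A_n}\subseteq L\setminus\{0\}$ is immediate from $A_n\subseteq L\setminus\{0\}$, giving $\upset[L]{A_n}=L\setminus\{0\}$.

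The one genuinely delicate point — and the reason condition (B)(i) appears in the hypotheses — is the depth argument in part (a): without the guarantee that each new atom covers one of the deepest atoms of $L$, a new atom could be attached entirely above an old cut point, become incomparable to it, and thereby destroy vertical decomposability; so the crux is observing that $\lev_k(L)$ lies below every cut point of a levellised $L$. Everything else is bookkeeping with the two structural observations, which are themselves routine given \autoref{T:HR-incremental} and \autoref{L:sublattice}.
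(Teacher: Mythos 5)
Your proof is correct and takes essentially the same route as the paper: part (a) rests on the identical observation that, since $L$ is levellised, every element of $\lev_k(L)$ lies below any cut point of $L$, so condition (B)(i) places each new atom below that cut point. Part (b) is exactly the case analysis the paper declares obvious (a cut point of $L_{A_n,\ldots,A_{n+m-1}}$ cannot lie in $L$, must be a new atom, and distinct new atoms are incomparable, forcing $m=1$ and $\upset[L]{A_n}=L\setminus\{0\}$), which you simply spell out in full.
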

\begin{proof}  Let $k = \dep[L](n-1)$ and let $\widetilde{L} = L_{A_n,\ldots,A_{n+m-1}}$.
 \begin{enumerate}[\upshape(a)]
  \item
    As~$L$ is vertically decomposable, there exists $i\in L\setminus\{0,1\}$ such that~$i$ is comparable to every element of~$L$.
    In particular, $j\isbelow[L] i$ holds for any $j\in \lev_k(L)$, since~$L$ is levellised.
    For any $a\in\{n,\ldots,n+m-1\}$, one has $A_a\cap\lev_k(L)\ne\emptyset$, and thus $a\isbelow[\widetilde{L}] i$,
    so~$i$ is comparable to every element of~$\widetilde{L}$.
  \item
    This is obvious, as $\widetilde{L}$ is vertically decomposable if and only if there exists $a\in\{n,\ldots,n+m-1\}$, such that one has $a\isbelow[\widetilde{L}] i$ for all $i\in\widetilde{L}\setminus\{0\}$.
 \end{enumerate}\vspace{-4ex}
\end{proof}

\subsection{Graded lattices}\label{SS:Graded}

We finish this section with a brief comment in relation to graded lattices.

\begin{definition}\label{D:graded}
 A lattice~$L$ is called \emph{graded} with rank function $\rho:L\to \NN$ if one has $\rho(b)=\rho(a)+1$ for any $a,b\in L$ satisfying $a\iscoveredby_L b$.
\end{definition}

\begin{remark}\label{R:graded}
 It is clear from the definitions that a lattice~$L$ is graded if and only if $\max\{\dep[L](i) : {i\in L}\}-\dep[L]$ is a rank function for~$L$, that is,
 if and only if $a\iscoveredby[L]b$ implies $\dep[L](a)=\dep[L](b)+1$;
 the latter is equivalent to $\bigcup_{a\in\lev_\ell(L)} \cov[L] a = \lev_{\ell-1}(L)$ for $\ell=1,\ldots,\dep[L](0)$.
\end{remark}

\begin{lemma}\label{L:graded}
 Let~$L$ be a levellised $n$-lattice with $\dep[L](n-1)=k$, let $m\in\NN^+$, and assume that $A_i\subseteq L\setminus\{0\}$ for $i=n,\ldots,n+m-1$ satisfy condition~(B) from \autoref{T:incremental}.
 Then $L_{A_n,\ldots,A_{n+m-1}}$ is graded if and only if
 \begin{enumerate}[\upshape(a)]
  \item $L$ is graded; and
  \item $\bigcup\limits_{i=n}^{n+m-1} A_i = \lev_k(L)$ holds.
 \end{enumerate}
\end{lemma}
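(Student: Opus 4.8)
The plan is to set $\widetilde{L} = L_{A_n,\ldots,A_{n+m-1}}$, pin down the depths and the covering relation of~$\widetilde{L}$ in terms of those of~$L$, and then compare the grading criterion of \autoref{R:graded} for the two lattices. Write $\bigcup_i$ for $\bigcup_{i=n}^{n+m-1}$. Since the~$A_i$ satisfy condition~(B), \autoref{T:incremental} tells us that~$\widetilde{L}$ is a levellised $(n+m)$-lattice with $\dep[\widetilde{L}](a) = \dep[L](a)\le k$ for $a\in L\setminus\{0\}$ and $\dep[\widetilde{L}](i) = k+1$ for $n\le i<n+m$; as~$0$ is the unique element of maximal depth and these lattices are levellised, one also checks $\dep[L](0) = k+1$ and $\dep[\widetilde{L}](0) = k+2$. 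From \autoref{R:incremental-weight-depth}, applied $m$~times, the covering relation of~$\widetilde{L}$ is that of~$L$ with the pairs~$(0,i)$ and $(i,a)$ for $a\in A_i$ added and the pairs~$(0,a)$ for $a\in\bigcup_i A_i$ removed; in particular $\cov[\widetilde{L}]a = \cov[L]a$ for $a\in L\setminus\{0\}$, $\cov[\widetilde{L}]i = A_i$ for $n\le i<n+m$, and $\cov[\widetilde{L}]0 = \{n,\ldots,n+m-1\}\cup\bigl(\cov[L]0\setminus\bigcup_i A_i\bigr)$.

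Next I would invoke \autoref{R:graded} in the form ``a lattice is graded iff $\dep$ increases by exactly~$1$ along every covering pair''. The covering pairs of~$\widetilde{L}$ form three disjoint families: (1)~the covering pairs $a\iscoveredby[L]b$ of~$L$ with $a\ne 0$; (2)~the pairs $0\iscoveredby[\widetilde{L}]b$, so $b\in\cov[\widetilde{L}]0$; and (3)~the pairs $i\iscoveredby[\widetilde{L}]b$ with $n\le i<n+m$, so $b\in A_i$. Using the depths above (and $\dep[L](b)\le k$ for $b\in L\setminus\{0\}$), the grading condition holds on family~(1) iff $\dep[L]$ increases by~$1$ along every covering pair of~$L$ with lower element different from~$0$; it holds on family~(2) iff $\cov[\widetilde{L}]0\subseteq\{n,\ldots,n+m-1\}$, equivalently $\cov[L]0\subseteq\bigcup_i A_i$; and it holds on family~(3) iff $\bigcup_i A_i\subseteq\lev_k(L)$. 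Running the same analysis on~$L$ shows that~$L$ is graded iff the family-(1) condition holds and $\cov[L]0\subseteq\lev_k(L)$.

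Finally I would reconcile the two. The one additional ingredient is that any levellised $n$-lattice satisfies $\lev_k(L)\subseteq\cov[L]0$: an $a\in\lev_k(L)$ is distinct from~$0$, hence covers some~$b$, and $\dep[L](b)\ge k+1$ together with $\dep[L](c)\le k$ for all $c\ne0$ forces $b=0$, so~$a$ is an atom. Thus $\cov[L]0\subseteq\lev_k(L)$ is equivalent to $\cov[L]0 = \lev_k(L)$, and the inclusions obtained from families~(2) and~(3) close up into
\[
  \cov[L]0 \;\subseteq\; \bigcup_i A_i \;\subseteq\; \lev_k(L) \;\subseteq\; \cov[L]0,
\]
so that ``$\widetilde{L}$ graded'' forces $\cov[L]0 = \bigcup_i A_i = \lev_k(L)$; conversely, $\bigcup_i A_i = \lev_k(L)$ (condition~(b)) and $\cov[L]0 = \lev_k(L)$ together make both inclusions hold. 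Hence, given the family-(1) condition, ``$\widetilde{L}$ graded'' is equivalent to ``(b) and $\cov[L]0 = \lev_k(L)$'', i.e.\ to ``(a) and~(b)'', which is the claim. I expect the only real obstacle to be keeping the two lowest levels of~$\widetilde{L}$ straight: that condition~(b) is precisely the grading requirement at level~$k+1$ of~$\widetilde{L}$, while the grading requirement at level~$k+2$ of~$\widetilde{L}$ — once~(b) and the inclusion $\lev_k(L)\subseteq\cov[L]0$ are taken into account — coincides with the bottom grading requirement of~$L$; the remaining verifications are routine bookkeeping from the covering relation and depths of~$\widetilde{L}$.
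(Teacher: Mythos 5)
Your proof is correct and follows essentially the same route as the paper: determine the depths, levels and covering relation of $L_{A_n,\ldots,A_{n+m-1}}$ from \autoref{T:incremental} and \autoref{R:incremental-weight-depth}, then apply the grading criterion of \autoref{R:graded}. The only cosmetic difference is that you use the ``depth drops by one along each covering pair'' form of \autoref{R:graded} together with the explicit observation $\lev_k(L)\subseteq\cov[L]0$, whereas the paper uses the level-wise formulation $\bigcup_{a\in\lev_\ell}\cov[L]a=\lev_{\ell-1}(L)$ and the remark that an element covers $0$ iff it covers nothing else; these amount to the same bookkeeping.
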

\begin{proof}
 Let $\widetilde{L} = L_{A_n,\ldots,A_{n+m-1}}$.
 By \autoref{T:incremental}, one has $\dep[\widetilde{L}](a) = \dep[L](a)$ for $a\in L\setminus\{0\}$,
 and $\dep[\widetilde{L}](i) = k+1 = \dep[L](0)$ for $n\le i < n+m$.
 By construction, $\dep[\widetilde{L}](0) = k+2$ holds.
 Thus, one has $\lev_\ell(L) = \lev_\ell(\widetilde{L})$ for $\ell=0,\ldots,k$, as well as
 $\lev_{k+1}(\widetilde{L}) = \{ n,\ldots,n+m-1 \}$
 and $\lev_{k+1}(L) = \{0\} = \lev_{k+2}(\widetilde{L})$.

 As $a\iscoveredby[L] b$ is equivalent to $a\iscoveredby[\widetilde{L}] b$ for $a,b\in L\setminus\{0\}$
 and $a\in L\setminus\{0\}$ (resp.\ $a\in \widetilde{L}\setminus\{0\}$) covers~$0$ in~$L$ (resp.\ in~$\widetilde{L}$)
 if and only if~$a$ covers no other element of~$L$ (resp.\ of~$\widetilde{L}$), the claim follows with \autoref{R:graded}, noting that one has $\cov_{\widetilde{L}}\; i = A_i$ for $i=n,\ldots,n+m-1$ by construction.
%
%
\end{proof}
\smallskip

\noindent
In order to restrict the construction to graded lattices, it is thus sufficient to enforce condition~(b) of \autoref{L:graded} in every step of the construction.
In the light of \autoref{L:indecomposable}, the construction can be restricted to vertically indecomposable graded latticed by avoiding to add a new level that contains a single element in any step of the construction.

Note, however, that the stabiliser chain approach does not yield any benefit for graded lattices compared to \cite{HeitzigReinhold,JipsenLawless}, since by condition~(b) of \autoref{L:graded}, the construction of covering sets only involves a single level anyway.

\section{Implementation and results}\label{S:ImplementationResults}

\subsection{Implementation notes}\label{SS:Implementation}

This section sketches some ideas that are crucial for an efficient implementation of the algorithm presented in the preceding sections as well as the validation methods employed.
We also compare our approach to testing canonicity to that used in~\cite{JipsenLawless}.

\subsubsection{Representing antichains using up-closed sets}\label{SSS:UpClosed}

While the theoretical results of \autoref{S:ImprovedAlgorithm} are formulated in terms of antichains, it is easier and computationally more efficient to work with sets~$S$ that are \emph{up-closed}, meaning that $\upset{S} = S$ holds.
(For instance, testing whether~$A\subseteq L$ is a lattice-antichain for~$L$ only involves~$\upset[L]{A}$.)

Clearly, if~$A$ is an antichain, then~$\upset[L]{A}$ is up-closed and the set of minimal elements of~$\upset[L]{A}$ is equal to~$A$.

\begin{lemma}\label{L:UpClosed}
 Let~$L$ be a levellised $n$-lattice with $\dep[L](n-1)=k$, let~$A$ and~$B$ be antichains in~$L$, and let $\ell\in\{1,\ldots,k\}$.  The following are equivalent:
 \begin{enumerate}[\upshape(i)]
  \item
    One has $\wt_L\big(A\cap\lev_d(L)\big) = \wt_L\big(B\cap\lev_d(L)\big)$ for $d=k,\ldots,\ell+1$,
    and $\wt_L\big(A\cap\lev_\ell(L)\big) < \wt_L\big(B\cap\lev_\ell(L)\big)$.
  \item
    One has $\wt_L\big((\upset[L]A)\cap\lev_d(L)\big) = \wt_L\big((\upset[L]B)\cap\lev_d(L)\big)$ for $d=k,\ldots,\ell+1$,
    and $\wt_L\big((\upset[L]A)\cap\lev_\ell(L)\big) < \wt_L\big((\upset[L]B)\cap\lev_\ell(L)\big)$.
 \end{enumerate}
\end{lemma}
\begin{proof}
 If~(i) holds, one has $((\upset[L]A)\setminus A) \cap \lev_d(L) = ((\upset[L]B)\setminus B) \cap \lev_d(L)$ for $d=k,\ldots,\ell$, since~$L$ is levellised.
 In particular, one has
 \begin{align*}
   \big((\upset[L]B)\setminus(\upset[L]A)\big)\cap\lev_\ell(L) &= (B\setminus A)\cap\lev_\ell(L) \text{ and}\\
   \big((\upset[L]A)\setminus(\upset[L]B)\big)\cap\lev_\ell(L) &= (A\setminus B)\cap\lev_\ell(L)
   \;,
 \end{align*}
 which together with~(i) imply~(ii).

 As~$A$ and~$B$ are the sets of minimal elements of~$\upset[L]{A}$ respectively~$\upset[L]{B}$, the converse implication is obvious.
\end{proof}

\begin{corollary}\label{C:levellised-weights-upset}
Let~$L$ be a levellised $n$-lattice with $\dep[L](n-1)=k$, let $m\in\NN^+$, and assume that $A_i\subseteq L\setminus\{0\}$ for $i=n,\ldots,n+m-1$ satisfy condition~(B) from \autoref{T:incremental}.
Then $L_{A_n,\ldots,A_{n+m-1}}$ is a canonical  $(n+m)$-lattice if and only if:
\begin{enumerate}[\upshape(i)]
 \item
   $L$ is canonical; and
 \item
   for $d=k,\ldots,1$, the sequence $\LWT[L][d](\upset[L]A_n,\ldots,\upset[L]A_{n+m-1})$ is lexicographically minimal under the action of $S_d$ as in \autoref{C:levellised-weights}.
\end{enumerate}
\end{corollary}
\begin{proof}
The claim follows from \autoref{C:levellised-weights} with \autoref{L:UpClosed} and the observation that one has
$\upset[L]{\big(\pi(A)\big)} = \pi(\upset[L]{A})$ for any $A\subseteq L$ and $\pi\in S_d$.
\end{proof}

\subsubsection{Packed representation of antichains and Bene\v{s} networks}\label{SSS:PackedBenes}

Let~$L$ be a canonical $n$-lattice with $\dep[L](n-1)=k$, and let $m\in\NN^+$.
To generate the descendants of~$L$ with~$m$ elements on level~$k+1$, we use a backtrack search to construct the sets $(\upset[L]A_i) \cap \lev_d(L)$ for $d=k,\ldots,1$ (outer loop) and $i=n,\ldots,n+m-1$ (inner loop).

Every time a candidate set $(\upset[L]A_i) \cap \lev_d(L)$ has been chosen, we use condition~(B) from \autoref{T:incremental} to check for possible contradictions (backtracking if there are any), and to keep track of any elements whose presence in
$(\upset[L]A_i) \cap \lev_{d'}(L)$ for some $d>d'\ge 1$ is forced by the choices made so far (restricting the possible choices later in the backtrack search if there are any).

Once all candidate sets on the current level have been chosen, we check for minimality under the action of the appropriate stabiliser~$S_d$ (cf.\ \autoref{C:levellised-weights-upset}) by explicit computation of the orbit, backtracking if necessary.

\medskip\noindent
Given the large number of configurations that have to be generated and tested for canonicity, it is critical to use an efficient data structure to store a configuration of antichains.

The sets $(\upset[L]A_n) \cap \lev_d(L),\ldots,(\upset[L]A_{n+m-1}) \cap \lev_d(L)$ are encoded as a single
$(m\cdot|\lev_d(L)|)$-bit integer.
That way, a lexicographic comparison of two configurations reduces to a single comparison of two $(m\cdot|\lev_d(L)|)$-bit integers.

When constructing lattices with up to $18$ elements, $m\cdot|\lev_d(L)|$ is at most~$64$; when constructing lattices with up to $23$ elements, $m\cdot|\lev_d(L)|$ is at most~$128$.
Thus, on a $64$-bit CPU, a lexicographic comparison of two configurations costs only very few clock cycles.

\medskip\noindent
To be able to apply permutations to a packed representation as described above effectively, we pre-compute a Bene\v{s} network  \cite[\S\,7.1.3]{TAOCP} for each generator of the stabiliser~$S_d$.
That way, the application of the generator to the configuration $\big((\upset[L]A_n) \cap \lev_d(L),\ldots,(\upset[L]A_{n+m-1}) \cap \lev_d(L)\big)$ is realised by a sequence of bitwise operations (XOR and shift operations) on the $(m\cdot|\lev_d(L)|)$-bit integer representation.

\medskip\noindent
If the sequence $\big((\upset[L]A_n) \cap \lev_d(L),\ldots,(\upset[L]A_{n+m-1}) \cap \lev_d(L)\big)$ is lexicographically minimal in its orbit under the action of~$S_d$, then the computation of this orbit also yields generators of~$S_{d-1}$ \cite[\S\,1.13]{Cameron}; we limit the number of generators by applying a technique known as \emph{Jerrum's filter} \cite[\S\,1.14]{Cameron}.

\subsubsection{Canonicity testing through canonical construction paths}\label{SSS:CanonicityTesting}

Recall that the algorithm in~\cite{JipsenLawless} constructs lattices by adding one element at a time, hence only a single lattice-antichain is used in each step; to ensure that only canonical lattices are constructed, the authors use a different approach:

\begin{enumerate}
 \item
   The set of all lattice-antichains of the parent lattice~$L$ is computed and, starting from the partition of this set into singletons, the action of generators of the relevant permutation group~$G$ (cf.\ \autoref{SS:Canonical}) is used to coarsen this partition until the set of $G$-orbits on the set of lattice-antichains is obtained.
 \item
   For each such $G$-orbit, an arbitrary representative~$A$ is chosen, and the programme \texttt{nauty}~\cite{nauty} is used to compute a canonical labelling for the corresponding descendant~$L_A$ of~$L$ as well as the automorphism group of~$L_A$;
   these data can be used to decide whether~$L_A$ is canonical.
\end{enumerate}

\begin{remark}\label{R:CanonicalLabelling}
Comparing to our approach, we note the following points:
 \begin{enumerate}[(a)]\setlength{\parskip}{0pt}\setlength{\parindent}{1.75em}
 \item
   The use of \texttt{nauty} for testing canonicity in~\cite{JipsenLawless} significantly reduces the complexity of the implementation:
   without counting the \texttt{nauty} code, the implementation used in~\cite{JipsenLawless} consists of approximately 1400 lines of code, compared to approximately 8300 lines of code for our implementation.
 \item
   Computing representatives of the $G$-orbits on the set of lattice-antichains as described above means that all lattice-antichains have to be kept in memory at the same time;
   our implementation avoids the latter, but it does so at the expense of having to compute (parts of) each orbit repeatedly during canonicity tests for different representatives of the same orbit.

   \medskip\noindent
   Storing all lattice-antichains simultaneously is not a concern for the approach from~\cite{JipsenLawless}, since the parent of a lattice with~$n$ elements has at most $2^{n-3}$ lattice-antichains; this bound is reached when adding an element to the \emph{$2$-fan} with $n-1$ elements.
   (See \autoref{F:k-fan}.)
   For $n=20$, the number of lattice-antichains is at most $131\,072$, which presents no difficulty.

\begin{figure}[b]
 \begin{center}\begin{tikzpicture}[-,auto,scale=1.0, semithick]
   \node (1) at (2,2) {$1$};
   \node (a) at (0,1) {$2$};
   \node (b) at (1,1) {$3$};
   \node     at (2.5,1) {$\cdots$};
   \node (c) at (4,1) {$k$};
   \node (D) at (2,0) {$0$};

   \path (1) edge (a);
   \path (1) edge (b);
   \path (1) edge (c);
   \path (a) edge (D);
   \path (b) edge (D);
   \path (c) edge (D);
 \end{tikzpicture}\end{center}
 \caption{The $2$-fan with~$k+1$ elements.  Its lattice-antichains are $\{1\}$ and the non-empty subsets of $\{2,\ldots,k\}$.}\label{F:k-fan}
\end{figure}

   However, the situation would be very different for our approach which works with $m$-tuples of lattice-antichains when adding a new level with~$m$ elements to a lattice:
   When constructing a lattice with~$n$ elements, the worst case arises when adding $\lfloor\frac{n}{2}\rfloor-1$ elements to a $2$-fan with $\lceil\frac{n}{2}\rceil+1$ elements; the number of tuples of lattice-antichains in this case is approximately $2^{(\lceil\frac{n}{2}\rceil-1)(\lfloor\frac{n}{2}\rfloor-1)}$.
   Thus, storing all $m$-tuples of lattice-antichains simultaneously becomes intractable for $n\gtrsim 14$.
 \item
   While the method of canonical construction paths described in~\cite{McKay} can be applied to a levellised construction as described here, and while it should in principle be possible to make use of \texttt{nauty} for this purpose, it seems that it would be far from easy to make this work for lattices of size~$20$;
   at the very least, one would have to resort to a different way of constructing representatives of the $G$-orbits of $m$-tuples of lattice-antichains.
   In any case, an implementation of a levellised construction using \texttt{nauty} would be significantly more complex than the implementation used in~\cite{JipsenLawless}.
 \item\label{I:independentVerification}
   It should be emphasised that both the theoretical approach used for testing canonicity and the actual implementation used in~\cite{JipsenLawless} are entirely independent from the ones used in this paper.
\end{enumerate}
\end{remark}

\subsubsection{Validation}\label{SSS:Validation}

Given the complexity of both our implementation and the actual computations, a brief summary of the validation methods we employed is warranted.

\medskip\noindent
During the implementation phase, the following measures were taken:\smallskip

\begin{compactitem}
 \item We followed best-practice software engineering methods; in particular, the implementation was modularised as much as possible, with each module being subjected to thorough unit testing.
 \item For lattice sizes $n\le13$, we validated the implementation using a debug build of our code that enabled additional consistency checks for intermediate results.
 For instance, we used a naive implementation of permutations to check images obtained from Bene\v{s} networks,
 and we performed a naive iteration over all permutations in the acting permutation group to validate our test for canonicity.
 \item The low-level architecture dependent constructs, specifically those relating to packed representations of antichains and Bene\v{s} networks, were validated with code compiled for register sizes 32 and 16 (in addition to the 64-bit production code).
 \item All tests listed above were rerun under \texttt{Valgrind}\footnote{\url{http://valgrind.org}}, checking for memory errors such as out-of-bound access and read-before-write.
\end{compactitem}

\bigskip\noindent
All lattice counts reported in the paper were obtained repeatedly, using different hardware (as well as different versions of our code); the hardware configurations mentioned in \autoref{SS:ResultsPerformance} both used ECC RAM.

\medskip\noindent
Except for the case $n=20$, where our results are new, our lattice counts agree with the previously published results~\cite{HeitzigReinhold,JipsenLawless}.
In the light of \autoref{R:CanonicalLabelling}~(\ref{I:independentVerification}), this constitutes an independent verification of the results.

\subsection{Results and performance}\label{SS:ResultsPerformance}

\pgfplotstableread[col sep=comma, trim cells=true]{
   {$n$} ,           {$i_n$} ,           {$u_n$}
       1 ,                 1 ,                 1
       2 ,                 1 ,                 1
       3 ,                 0 ,                 1
       4 ,                 1 ,                 2
       5 ,                 2 ,                 5
       6 ,                 7 ,                15
       7 ,                27 ,                53
       8 ,               126 ,               222
       9 ,               664 ,              1078
      10 ,              3954 ,              5994
      11 ,             26190 ,             37622
      12 ,            190754 ,            262776
      13 ,           1514332 ,           2018305
      14 ,          12998035 ,          16873364
      15 ,         119803771 ,         152233518
      16 ,        1178740932  ,       1471613387
      17 ,       12316480222  ,      15150569446
      18 ,      136060611189  ,     165269824761
      19 ,     1582930919092  ,    1901910625578
      20 ,    19328253734491  ,   23003059864006
}\mydataSizes
\pgfplotstableread[col sep=comma, trim cells=true]{
   {$n$} ,   {CPU (A)} , {real (A)} , {J-CPU (A)} , {J-real (A)} ,    {CPU (B)} ,  {real (B)} , {J-CPU (B)} , {J-real (B)}
         ,             ,            ,             ,              ,              ,             ,             ,
         ,             ,            ,             ,              ,              ,             ,             ,
      14 ,       8.692 ,      4.073 ,      73.370 ,       18.442 ,       10.609 ,       3.265 ,     165.052 ,       8.490
      15 ,      58.067 ,     16.751 ,     727.583 ,      182.622 ,       72.930 ,       7.009 ,    1610.657 ,      80.802
      16 ,     549.888 ,    142.089 ,    7829.780 ,     1990.758 ,      694.860 ,      42.525 ,   17069.726 ,     853.882
      17 ,    5838.416 ,   1487.515 ,   90899.290 ,    22956.336 ,     7361.204 ,     428.360 ,  195546.786 ,    9778.308
      18 ,   66563.074 ,  16909.713 ,             ,              ,    84166.586 ,    4896.910 , 2403484.288 ,  120185.508
      19 ,             ,            ,             ,              ,  1032045.244 ,   61493.035 ,             ,
      20 ,             ,            ,             ,              , 13039331.147 ,  805208.812 ,             ,
}\mydataTiming

\newlength{\myskip}
\pgfplotstableset{
  1000 sep={\,},
  empty cells with={---\hspace*{1pt}},
  int detect,
  column type={r@{\hspace{\myskip}}},
  every last column/.style={column type={r}},
  columns/colnames/.style={string type, column type={c}}
}

\autoref{T:numbers} shows the number~$i_n$ of isomorphism classes of vertically indecomposable unlabelled lattices on~$n$ elements, and the number~$u_n$ of isomorphism classes of unlabelled lattices on~$n$ elements for $n\le20$; the values $i_{20}$ and $u_{20}$ are new.

\begin{table}[p]
 \pgfplotstabletranspose\mydataSizesTranspose{\mydataSizes}

 \setlength{\myskip}{17.9pt}
 \pgfplotstabletypeset[
   columns={colnames,0,1,2,3,4,5,6,7,8,9,10},
   omit header,
   every first row/.style={after row=\toprule}
 ]{\mydataSizesTranspose}
 \vskip2ex

 \setlength{\myskip}{23pt}
 \pgfplotstabletypeset[
   columns={colnames,11,12,13,14,15},
   omit header,
   every first row/.style={after row=\toprule}
 ]{\mydataSizesTranspose}
 \vskip2ex

 \setlength{\myskip}{11.5pt}
 \pgfplotstabletypeset[
   columns={colnames,16,17,18,19},
   omit header,
   every first row/.style={after row=\toprule},
   every row no 1 column no 4/.style={
        postproc cell content/.append style={
            /pgfplots/table/@cell content/.add={\cellcolor{red!15!white}}{}} 
        },
   every row no 2 column no 4/.style={
        postproc cell content/.append style={
            /pgfplots/table/@cell content/.add={\cellcolor{red!15!white}}{}} 
        }
 ]{\mydataSizesTranspose}
 \caption{Numbers~$i_n$ and~$u_n$ of isomorphism classes of vertically indecomposable unlabelled lattices, respectively arbitrary unlabelled lattices, on~$n$ elements.}\label{T:numbers}
\end{table}

\autoref{T:timing}, \autoref{F:timingA} and \autoref{F:timingB} show the total CPU time and the real time taken by the computations for $n\ge 14$ using the algorithm described in this paper and the algorithm from \cite{JipsenLawless} for two hardware configurations:
\begin{enumerate}[(A)]
 \item
   4 threads on a system with one 4-core Intel Xeon~E5-1620~v2 CPU (clock frequency 3.70\,GHz; 10\,MB L3 cache) with DDR3-1600 RAM (single thread bandwidth\footnote{\url{https://zsmith.co/bandwidth.html}} 14.9\,GB/s; total bandwidth 35.3\,GB/s).  The system load was just over~4 during the tests.
 \item
   20 threads on a system with two 10-core Intel Xeon~E5-2640~v4 CPUs (clock frequency 2.60\,GHz; 25\,MB L3 cache) with DDR4-2400 RAM (single thread bandwidth 9.9\,GB/s; total bandwidth 89.2\,GB/s).  The system load was just over~20 during the tests.
\end{enumerate}

\begin{table}[p]
 \pgfplotstabletranspose\mydataTimingTranspose{\mydataTiming}
 \setlength{\myskip}{10.4pt}\small
 \pgfplotstabletypeset[
   fixed relative, precision=3,
   omit header,
   every first row/.style={after row=\toprule},
   columns/colnames/.style={string type, column type={@{\,}r@{\hspace{4pt}}}},
   columns/0/.style={string type, column type={r}},
   every row no 1/.style = {after row=\vspace{1.5ex}},
   every row no 4/.style = {after row=\toprule},
   every row no 5/.style = {after row=\vspace{1.5ex}},
   every row 0 column colnames/.style={postproc cell content/.style={@cell content={}}},
   every row 1 column colnames/.style={postproc cell content/.style={@cell content={\multirow{4}{*}{(A)}}}},
   every row 2 column colnames/.style={postproc cell content/.style={@cell content={}}},
   every row 3 column colnames/.style={postproc cell content/.style={@cell content={}}},
   every row 4 column colnames/.style={postproc cell content/.style={@cell content={}}},
   every row 5 column colnames/.style={postproc cell content/.style={@cell content={\multirow{4}{*}{(B)}}}},
   every row 6 column colnames/.style={postproc cell content/.style={@cell content={}}},
   every row 7 column colnames/.style={postproc cell content/.style={@cell content={}}},
   every row 8 column colnames/.style={postproc cell content/.style={@cell content={}}},
   every row 0 column 0/.style={postproc cell content/.style={@cell content={}}},
   every row 1 column 0/.style={postproc cell content/.style={@cell content={\hspace*{1ex}\multirow{2}{*}{\rotatebox{90}{\S\,3-4\hspace{1pt}}}}}},
   every row 2 column 0/.style={postproc cell content/.style={@cell content={}}},
   every row 3 column 0/.style={postproc cell content/.style={@cell content={\hspace*{1ex}\multirow{2}{*}{\rotatebox{90}{[JL]\hspace{2pt}}}}}},
   every row 4 column 0/.style={postproc cell content/.style={@cell content={}}},
   every row 5 column 0/.style={postproc cell content/.style={@cell content={\hspace*{1ex}\multirow{2}{*}{\rotatebox{90}{\S\,3-4\hspace{1pt}}}}}},
   every row 6 column 0/.style={postproc cell content/.style={@cell content={}}},
   every row 7 column 0/.style={postproc cell content/.style={@cell content={\hspace*{1ex}\multirow{2}{*}{\rotatebox{90}{[JL]\hspace{2pt}}}}}},
   every row 8 column 0/.style={postproc cell content/.style={@cell content={}}},
   every row 0 column 1/.style={postproc cell content/.style={@cell content={$n$}}},
   every row 1 column 1/.style={postproc cell content/.style={@cell content={CPU}}},
   every row 2 column 1/.style={postproc cell content/.style={@cell content={real}}},
   every row 3 column 1/.style={postproc cell content/.style={@cell content={CPU}}},
   every row 4 column 1/.style={postproc cell content/.style={@cell content={real}}},
   every row 5 column 1/.style={postproc cell content/.style={@cell content={CPU}}},
   every row 6 column 1/.style={postproc cell content/.style={@cell content={real}}},
   every row 7 column 1/.style={postproc cell content/.style={@cell content={CPU}}},
   every row 8 column 1/.style={postproc cell content/.style={@cell content={real}}}
 ]{\mydataTimingTranspose}
 \caption{Total CPU time and real time for the longer computations from \autoref{T:numbers} using the algorithm presented in this paper (labelled \S\,3-4) and the algorithm from \cite{JipsenLawless} (labelled [JL]).  Times are given in seconds.}\label{T:timing}
\end{table}

\begin{figure}[p]
 \begin{tikzpicture}[scale=1.2]
  \begin{semilogyaxis}[axis y line=left,
                       axis line style={-},
                       restrict x to domain=14:18,
                       ymin=4000000,
                       ymax=1000000000000,
                       xlabel=$n$, ylabel=$i_n$,
                       legend pos=north west]
    \addplot[black, mark=*] table[x={$n$},y={$i_n$}] {\mydataSizes};
   \legend{$i_n$}
  \end{semilogyaxis}%
  \begin{semilogyaxis}[axis x line=none,
                       axis y line=right,
                       axis line style={-},
                       restrict x to domain=14:18,
                       ymin=1,
                       ymax=250000,
                       xlabel=$n$,
                       ylabel={time\,[s]},
                       legend pos=south east]
    \addplot[red, mark=o, mark options=solid] table[x={$n$},y={CPU (A)}] {\mydataTiming};
    \addlegendentry{\S\,3-4, CPU}
    \addplot[dashed, red, mark=o, mark options=solid] table[x={$n$},y={real (A)}] {\mydataTiming};
    \addlegendentry{\S\,3-4, real}
    \addplot[blue, mark=star, mark options=solid] table[x={$n$},y={J-CPU (A)}] {\mydataTiming};
    \addlegendentry{[JL], CPU}
    \addplot[dashed, blue, mark=star, mark options=solid] table[x={$n$},y={J-real (A)}] {\mydataTiming};
    \addlegendentry{[JL], real}
  \end{semilogyaxis}%
 \end{tikzpicture}%
 \caption{Growth of the number~$i_n$ of vertically indecomposable lattices with~$n$ elements, as well as of the CPU time and real time taken for their enumeration on hardware configuration~(A), in terms of~$n$.}\label{F:timingA}
\end{figure}

\begin{figure}[p]\bigskip
 \begin{tikzpicture}[scale=1.2]
  \begin{semilogyaxis}[axis y line=left,
                       axis line style={-},
                       restrict x to domain=14:20,
                       ymin=4000000,
                       ymax=100000000000000,
                       xlabel=$n$, ylabel=$i_n$,
                       legend pos=north west]
    \addplot[black, mark=*] table[x={$n$},y={$i_n$}] {\mydataSizes};
   \legend{$i_n$}
  \end{semilogyaxis}%
  \begin{semilogyaxis}[axis x line=none,
                       axis y line=right,
                       axis line style={-},
                       restrict x to domain=14:20,
                       ymin=1,
                       ymax=25000000,
                       xlabel=$n$,
                       ylabel={time\,[s]},
                       legend pos=south east]
    \addplot[red, mark=o, mark options=solid] table[x={$n$},y={CPU (B)}] {\mydataTiming};
    \addlegendentry{\S\,3-4, CPU}
    \addplot[dashed, red, mark=o, mark options=solid] table[x={$n$},y={real (B)}] {\mydataTiming};
    \addlegendentry{\S\,3-4, real}
    \addplot[blue, mark=star, mark options=solid] table[x={$n$},y={J-CPU (B)}] {\mydataTiming};
    \addlegendentry{[JL], CPU}
    \addplot[dashed, blue, mark=star, mark options=solid] table[x={$n$},y={J-real (B)}] {\mydataTiming};
    \addlegendentry{[JL], real}
  \end{semilogyaxis}%
 \end{tikzpicture}%
 \caption{Growth of the number~$i_n$ of vertically indecomposable lattices with~$n$ elements, as well as of the CPU time and real time taken for their enumeration on hardware configuration~(B), in terms of~$n$.}\label{F:timingB}
\end{figure}

\noindent
The algorithm described in this paper was implemented in~C using \texttt{pthreads} by the first author.
The authors of \cite{JipsenLawless} kindly provided C-code using \texttt{MPI} implementing their algorithm.
All code was compiled using \texttt{GCC} with maximal optimisations for the respective architecture.
The compiler version was 4.8.1 for hardware configuration~(A) and 4.8.5 for hardware configuration~(B).

\begin{remark}\label{R:timing}
We conclude by making some observations regarding the performance of the implementation of the algorithm presented.
 \begin{enumerate}[(a)]\setlength{\parskip}{0pt}\setlength{\parindent}{1.75em}
  \item
    For longer enumerations, the speedup compared to the algorithm from \cite{JipsenLawless} is between a factor of 13 (hardware configuration~(A), $n=15$) and a factor of 30 (hardware configuration~(B), $n=18$).

    On both hardware configurations, the speedup increases with~$n$.
    This is expected in the light of \autoref{R:levellised-weights}, as the benefits of the stabiliser chain approach are the more pronounced, the more levels a lattice has.
  \item
    Obtaining a meaningful complexity analysis seems out of reach, as estimating the average case complexity would require a detailed understanding of the tree of canonical lattices.
    Experimentally, the algorithm seems to be close to optimal in the sense that the computation time grows roughly linearly in the number of lattices constructed.
  \item
    On hardware configuration~(A), the L2 and L3 cache hit rates during the computations were on average around 55-60\% respectively 80-85\%; on hardware configuration~(B), this information could not be obtained.
    The high L3 cache hit rate suggests that DRAM bandwidth is not a significant limiting factor for the overall performance of the algorithm.
  \item
    The throughput corresponds to roughly 1\,800 CPU clock cycles per lattice on hardware configuration~(A) and to roughly 1\,750 CPU clock cycles per lattice on hardware configuration~(B); these figures include pre-com\-pu\-ta\-tions and inter-thread communication.
    This similarity of these values in spite of the different DRAM speeds on hardware configuration~(A) and hardware configuration~(B) is consistent with DRAM bandwidth not being a significant limiting factor for the overall performance.
 \end{enumerate}
\end{remark}

\section*{Resources}

The C source code implementing the described algorithm that was used for the computations reported in this paper is available  under the GNU GPL v.\,3+ licence at \url{https://bitbucket.org/vgebhardt/unlabelled-lattices}.

\medskip\noindent
Data describing the unlabelled lattices on~$n$~elements for $n\le 16$ can be retrieved from \url{http://doi.org/10.26183/5bb57347b10a0}.
The data are provided in the form of \textbf{xz}-compressed plain text files, in which each line describes the covering relation of the canonical (labelled) representative of one isomorphism class of unlabelled lattices.

The main obstacle to providing the lattices on more than 16~elements is the amount of data:
already the compressed file containing the unlabelled lattices on 16~elements has a size of 3.7\,GB.

\providecommand{\MR}{\relax\ifhmode\unskip\space\fi MR}
\providecommand{\arXiv}{\relax\ifhmode\unskip\space\fi arXiv:}

\bigskip\bigskip
\noindent
\begin{minipage}[t]{0.5\textwidth}
\noindent\textbf{Volker Gebhardt}\\
\noindent \texttt{v.gebhardt@westernsydney.edu.au}
\end{minipage}
\hfill
\begin{minipage}[t]{0.42\textwidth}
\noindent\textbf{Stephen Tawn}\\
\noindent \texttt{stephen@tawn.co.uk}\\
\noindent \url{http://www.stephentawn.info}
\end{minipage}
\medskip
\begin{center}
Western Sydney University\\
Centre for Research in Mathematics and Data Science \\
Locked Bag 1797, Penrith NSW 2751, Australia\\
\noindent \url{http://www.westernsydney.edu.au/crm}
\end{center}

\end{document}